\documentclass[12pt]{article}

\usepackage{amsmath,amsthm,amssymb,mathdots,enumerate,hyperref} 

\usepackage{xcolor}


\usepackage{faktor}
\usepackage{array}
\usepackage{tikz}
\usepackage{tikz-cd}

\usepackage{setspace}
\setstretch{1.1}
\allowdisplaybreaks

\setlength{\topmargin}{-60pt}
\setlength{\headheight}{12truept}
\setlength{\headsep}{25pt}
\setlength{\footskip}{37pt}
\setlength{\hoffset}{10mm}
\setlength{\voffset}{39pt}
\setlength{\oddsidemargin}{-7mm}
\setlength{\evensidemargin}{-7mm}
\setlength{\textheight}{217mm}
\setlength{\textwidth}{156mm}

\newtheorem{theorem}{Theorem}[section]
\newtheorem{proposition}[theorem]{Proposition}

\newtheorem{lemma}[theorem]{Lemma}
\newtheorem{corollary}[theorem]{Corollary}

\theoremstyle{remark}
\newtheorem{remark}[theorem]{Remark}

\numberwithin{equation}{section}

 \usetikzlibrary{decorations.pathmorphing} 

\DeclareFontEncoding{OT2}{}{}
\DeclareFontSubstitution{OT2}{cmr}{m}{n}
\DeclareFontFamily{OT2}{cmr}{\hyphenchar\font45 }
\DeclareFontShape{OT2}{cmr}{m}{n}{<5><6><7><8><9>gen*wncyr<10><10.95><12><14.4><17.28><20.74><24.88>wncyr10}{}
\DeclareFontShape{OT2}{cmr}{b}{n}{<5><6><7><8><9>gen*wncyb<10><10.95><12><14.4><17.28><20.74><24.88>wncyb10}{}
\DeclareMathAlphabet{\mathcyr}{OT2}{cmr}{m}{n}
\DeclareMathAlphabet{\mathcyb}{OT2}{cmr}{b}{n}
\SetMathAlphabet{\mathcyr}{bold}{OT2}{cmr}{b}{n}


\newcommand{\C}{{\mathbb C}}
\newcommand{\Q}{{\mathbb Q}}

\DeclareMathOperator{\wt}{wt}
\DeclareMathOperator{\dep}{dep}
\DeclareMathOperator{\height}{ht}
\newcommand{\kk}{{\bf k}}

\newcommand{\z}{\xi}
\newcommand{\zn}{z}
\newcommand{\bz}{\overline{z}}

\newcommand{\es}{ \operatorname{e}}

\title{Special values of finite multiple harmonic $q$-series \\at roots of unity}
\author{
Henrik Bachmann\footnote{email : henrik.bachmann@math.nagoya-u.ac.jp, Nagoya University},\,  
Yoshihiro Takeyama\footnote{email :  takeyama@math.tsukuba.ac.jp, University of Tsukuba},\,  
Koji Tasaka\footnote{email : tasaka@ist.aichi-pu.ac.jp, Aichi Prefectural University}
}
\date{}

\begin{document}

\maketitle

\begin{abstract}
We study special values of finite multiple harmonic $q$-series at roots of unity. These objects were recently introduced by the authors and it was shown that they have connections to finite and symmetric multiple zeta values and the Kaneko-Zagier conjecture. In this note we give new explicit evaluations for finite multiple harmonic $q$-series at roots of unity and prove Ohno-Zagier-type relations for them. 
\end{abstract}


\section{Introduction}

Finite multiple harmonic $q$-series at roots of unity were introduced by the authors in \cite{BTT}. The motivation to study these object is their connection to finite and symmetric multiple zeta values and a reinterpretation of the conjecture of Kaneko and Zagier in \cite{KanekoZagier}. In this paper we concentrate on the finite multiple harmonic $q$-series at roots of unity themselves and give explicit evaluations and Ohno-Zagier-type relations for them. 

For an index $\kk=(k_1,\dots,k_r)\in (\mathbb{Z}_{\ge 1})^r$, a natural number $n\geq 1$ and a complex number  $q$ satisfying $q^{m}\neq 1$ for $n>m>0$, the \emph{finite multiple harmonic $q$-series} and their star-versions are defined by
\begin{align*}\begin{split} \label{eq:defzn}
\zn_n(\kk;q) = \zn_n(k_1,\dots,k_r;q) =\sum_{n > m_1 > \dots > m_r >0} 
\frac{q^{(k_1-1) m_1} \dots q^{(k_r-1) m_r}}{ [m_1]_q^{k_1} \dots [m_r]_q^{k_r}}\,,\\
\zn^\star_n(\kk;q) = \zn^\star_n(k_1,\dots,k_r;q) =\sum_{n > m_1 \geq \dots \geq m_r >0} 
\frac{q^{(k_1-1) m_1} \dots q^{(k_r-1) m_r}}{ [m_1]_q^{k_1} \dots [m_r]_q^{k_r}}\,,
\end{split}
\end{align*}
where $[m]_q$ is the $q$-integer $[m]_q= \frac{1-q^m}{1-q}$. 
For a primitive $n$-th root of unity $\zeta_{n}$ the values $\zn_n(\kk;\zeta_{n})$ and $\zn^\star_n(\kk;\zeta_{n})$ are elements in the cyclotomic field $\Q(\zeta_n)$. 
The first result of this paper are the following evaluations of $z_n(k,\dots,k;\zeta_n)$.
\begin{theorem} \label{thm:zkkk}
For all $k,r\geq 1$ and any $n$-th primitive root of unity $\zeta_n$ we have $z_n({\{k\}^r}; \zeta_n) \in (1-\zeta_n)^{kr} \Q$ and in particular
\begin{align*}
z_n({\{1\}^r}; \zeta_n) &= \frac{1}{n} \binom{n}{r+1} (1-\zeta_n)^{r}\,,  \\ 
z_n({\{2\}^r}; \zeta_n) &= \frac{(-1)^r}{n \cdot (r+1)} \binom{n+r}{2r+1}(1-\zeta_n)^{2r}\,, \\
z_n({\{3\}^r}; \zeta_n) &= \frac{1}{n^2(r+1)} \left( \binom{n+2r+1}{3r+2} +(-1)^r \binom{n+r}{3r+2}\right)(1-\zeta_n)^{3r} \,.
\end{align*}
\end{theorem}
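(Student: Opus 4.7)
The plan is to compute the generating series $G_{k,n}(t) := \sum_{r\geq 0} z_n(\{k\}^r;\zeta_n)\,t^r$ in closed form and then read off coefficients. Since $z_n(\{k\}^r;q)$ is the $r$-th elementary symmetric polynomial in $\{q^{(k-1)m}/[m]_q^k : 1\le m\le n-1\}$, we have the factorization $G_{k,n}(t) = \prod_{m=1}^{n-1}(1 + t\,\zeta_n^{(k-1)m}/[m]_{\zeta_n}^k)$. Clearing denominators using $[m]_{\zeta_n} = (1-\zeta_n^m)/(1-\zeta_n)$ and the classical product $\prod_{m=1}^{n-1}(1-\zeta_n^m) = n$, and setting $u := t(1-\zeta_n)^k$, this rewrites as $G_{k,n}(t) = n^{-k}\prod_{m=1}^{n-1}((1-\zeta_n^m)^k + u\,\zeta_n^{(k-1)m})$. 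Letting $x_1(u),\dots,x_k(u)$ denote the roots of the degree-$k$ polynomial $f(x) = (1-x)^k + u\,x^{k-1}$, the identity $\prod_{m=1}^{n-1}(y-\zeta_n^m) = (y^n-1)/(y-1)$ then yields $G_{k,n}(t) = n^{-k}\prod_{i=1}^{k}(x_i^n-1)/(x_i-1)$.

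By Vieta applied to $f$, one has $\prod_i x_i = 1$ and $\prod_i(x_i-1) = u$. Hence $\prod_i(x_i^n-1)$ is a symmetric polynomial in the $x_i$ with coefficients in $\Q[u]$, and it vanishes at $u=0$, so $H_{k,n}(u) := \prod_i(x_i^n-1)/u$ lies in $\Q[u]$; this alone gives the general rationality claim $z_n(\{k\}^r;\zeta_n) \in (1-\zeta_n)^{kr}\Q$. For $k=1$ the unique root is $1+u$, and $((1+u)^n-1)/(nu) = n^{-1}\sum_{r\ge 0}\binom{n}{r+1}u^r$ gives the first formula at once. For $k=2$, $x_1 x_2 = 1$ and $x_1+x_2 = 2-u$; writing $x_1 = e^{i\phi}$ yields $u = 4\sin^2(\phi/2)$ and $\prod_i(x_i^n-1) = 4\sin^2(n\phi/2)$, so $G_{2,n}(t) = \sin^2(n\phi/2)/(n^2\sin^2(\phi/2))$. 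Expanding via $\cos n\phi = T_n(1-u/2)$ and comparing with $(-1)^r\binom{n+r}{2r+1}/(n(r+1))$ then reduces to a Vandermonde-style binomial identity.

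For $k=3$ the cubic $(1-x)^3 + ux^2$ has $e_1 = 3+u$, $e_2 = 3$, $e_3 = 1$, from which $\prod_i(x_i^n-1) = p_n - p_{-n}$ (using $e_2(x^n) = p_{-n}$ because $\prod x_i = 1$), where $p_m := \sum_i x_i^m$. The power sums satisfy linear recurrences coming from $x^3 - (3+u)x^2 + 3x - 1 = 0$ and the reciprocal cubic $x^3 - 3x^2 + (3+u)x - 1 = 0$, with explicit rational generating series in an auxiliary variable $z$. The main obstacle is extracting $[u^{r+1}](p_n - p_{-n})$ in closed form and matching it with $n(\binom{n+2r+1}{3r+2} + (-1)^r\binom{n+r}{3r+2})/(r+1)$. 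The cleanest route I see is the substitution $x = 1+y$, which transforms the cubic into $y^3 = u(1+y)^2$, followed by a Lagrange--B\"urmann-type inversion applied to the symmetric sum $\sum_i(1+y_i)^{\pm n}$ (which is a single-valued power series in $u$ even though the individual branches behave like $y_i \sim u^{1/3}\omega^{i-1}$). The two binomial terms in the formula should then arise respectively from the $p_n$ and $-p_{-n}$ contributions, the sign $(-1)^r$ tracking the orientation of the reciprocal cubic.
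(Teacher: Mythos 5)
Your first half is the paper's own argument: both you and the authors write $\sum_{r}z_n(\{k\}^r;\zeta_n)\,t^r$ as $\prod_{m=1}^{n-1}\bigl(1+t\,\zeta_n^{(k-1)m}/[m]_{\zeta_n}^k\bigr)$, clear denominators with $\prod_{m=1}^{n-1}(1-\zeta_n^m)=n$, pass to the roots of $(1-x)^k+ux^{k-1}$, and arrive at $n^{-k}\prod_i(x_i^n-1)/u$ with $u=t(1-\zeta_n)^k$; your rationality argument (this symmetric expression lies in $\Q[u]$) is even slightly more direct than the paper's, which deduces rationality only after forming a generating series over $n$. The endgame is where you genuinely differ. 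The paper multiplies by $n^{k-1}Y^n$ and sums over $n$, obtaining $\frac{(-1)^{k-1}}{X}\log\prod_{l=0}^{k}F_{k,l}(X,Y)^{(-1)^l}$ with $F_{k,l}(X,Y)=\prod_{i_1<\cdots<i_l}(1-\alpha_{i_1}\cdots\alpha_{i_l}Y)$, computes these polynomials explicitly for $k\le3$ (e.g.\ $F_{3,1}=(1-Y)^3-XY$, $F_{3,2}=(1-Y)^3+XY^2$), and reads off the binomials from $(1-Y)^{-l}=\sum_j\binom{l-1+j}{l-1}Y^j$; you stay at fixed $n$ and extract coefficients directly, via Chebyshev polynomials for $k=2$ and via power sums plus Lagrange--B\"urmann for $k=3$. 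That route is viable, but be aware that the extraction you yourself call ``the main obstacle'' is exactly the content of the $k=2,3$ evaluations, and in your write-up it is asserted (``should then arise'') rather than carried out, so as written those two cases are a plan, not a proof. The plan does close: with $x=1+y$ and $y^3=u(1+y)^2$, the log-derivative (residue) form of Lagrange inversion for sums over all three branches gives
\[
[u^m]\sum_{i=1}^{3}(1+y_i)^{\pm n}\;=\;\pm\frac{n}{m}\,[y^{3m-1}]\,(1+y)^{\pm n+2m-1},
\]
hence $[u^{r+1}](p_n-p_{-n})=\frac{n}{r+1}\bigl(\binom{n+2r+1}{3r+2}+(-1)^r\binom{n+r}{3r+2}\bigr)$, exactly as required, and the same device applied to $y^2=-u(1+y)$ settles $k=2$ without the trigonometric detour. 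Comparing the two endgames: the paper's sum over $n$ exhibits the answer for all $n$ at once as the logarithm of an explicit rational function (and works for any $k$ once the $F_{k,l}$ are computed), while your fixed-$n$ inversion is shorter for each given $k$ and avoids the $F_{k,l}$ altogether --- but you must actually perform the inversion for the argument to be complete.
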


The second result of this paper are  Ohno-Zagier-type relations for $z_n(\kk; \zeta_n)$ and $z^\star_n(\kk; \zeta_n)$. For an index $\kk=(k_1,\dots,k_r)$ denote by $\wt(\kk) = k_1+\dots+k_r$ the \emph{weight} and by $\dep(\kk)=r$ the \emph{depth}.
In \cite{OhnoZagier}, Ohno and Zagier introduced the \emph{height} by 
$\height(\kk)=\#\left\{ a \in \{1, 2, \ldots , r\} \, | \, k_{a}\ge 2\right\}$. 
They proved an explicit formula for the generating function of the sum 
of multiple zeta values of fixed weight, depth and height. 

To state the Ohno-Zagier-type relations for finite multiple harmonic $q$-series at roots of unity we first define their modified versions by
\begin{equation*}
\bz_{n}(\mathbf{k};\zeta_n) := (1-\zeta_n)^{-\wt(\kk)} z_{n}(\mathbf{k};\zeta_n)\,,\qquad \bz^{\star}_{n}(\mathbf{k};\zeta_n) := (1-\zeta_n)^{-\wt(\kk)} z^{\star}_{n}(\mathbf{k};\zeta_n)\,.
\end{equation*}
For positive integers $k,r,s\ge0$, let $I(k, r, s)$ be the set of indices of weight $k$, depth $r$ and height $s$. We denote the generating function of the sum of the modified versions of fixed weight, depth and height  by
\begin{equation}\label{eq:ohno-zagier-generating-funtcion}
\begin{aligned}
& 
F_{n}(x, y, z)=1+\sum_{r=1}^{\infty}\sum_{s=0}^{r}\sum_{k=r+s}^{\infty}
\left(\sum_{\mathbf{k} \in I(k, r, s)}\bz_{n}(\mathbf{k};\zeta_n)\right)x^{k-r-s}y^{r-s}z^{s}, \\ 
& 
F_{n}^{\star}(x, y, z)=1+\sum_{r=1}^{\infty}\sum_{s=0}^{r}\sum_{k=r+s}^{\infty}
\left(\sum_{\mathbf{k} \in I(k, r, s)}\bz_{n}^{\star}(\mathbf{k};\zeta_n)\right)x^{k-r-s}y^{r-s}z^{s}. 
\end{aligned}
\end{equation} 

\begin{theorem}\label{thm:ohno-zagier}
The sum of $\bz_{n}(\kk)$ (or $\bz_{n}^{\star}(\kk)$) 
of fixed weight, depth and height is a rational number. 
More explicitly we have  
\begin{align*}
F_{n}(x, y, z)=U_{n}(x, y, z), \qquad 
F_{n}^{\star}(x, y, z)=U_{n}(x, -y, -z)^{-1},   
\end{align*}
where 
\begin{align}
\begin{aligned}
& 
U_{n}(x, y, z)=\frac{x}{(1+x)^{n}-1} \\ 
& {}\times 
\sum_{\substack{a, b \ge 0 \\ a+b\le n-1}}\frac{1}{n-a-b}
\binom{n-a-1}{b}\binom{n-b-1}{a}
(1+x)^{a}(1+y)^{b}(xy-z)^{n-1-a-b}.  
\end{aligned}
\label{eq:ohno-zagier-def-U}
\end{align}
\end{theorem}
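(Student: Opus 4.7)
The plan is to express $F_n$ and $F_n^\star$ as finite products of explicit rational functions indexed by $m\in\{1,\dots,n-1\}$, to evaluate these products using cyclotomic identities, and to match the outcome with $U_n$.

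First, I would swap the order of summation, summing over $k_1,\dots,k_r$ for each fixed $(m_1,\dots,m_r)$. Using the identity $(1-\zeta_n)^{-k}/[m]_{\zeta_n}^{k}=(1-\zeta_n^{m})^{-k}$ and the fact that the monomial $x^{k-r-s}y^{r-s}z^{s}$ in \eqref{eq:ohno-zagier-generating-funtcion} factors across the components of the index ($y$ for each $k_i=1$ and $zx^{k_i-2}$ for each $k_i\ge 2$), the geometric sum yields
\begin{equation*}
P_m(x,y,z):=\frac{y-(y+xy-z)\zeta_n^m}{(1-\zeta_n^m)(1-(1+x)\zeta_n^m)}.
\end{equation*}
Since strict (resp.\ non-strict) decreasing chains of length $r$ in $\{1,\dots,n-1\}$ correspond to $r$-subsets (resp.\ $r$-multisets), I obtain
\begin{equation*}
F_n(x,y,z)=\prod_{m=1}^{n-1}\bigl(1+P_m(x,y,z)\bigr),\qquad F_n^\star(x,y,z)=\prod_{m=1}^{n-1}\bigl(1-P_m(x,y,z)\bigr)^{-1}.
\end{equation*}

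The closed form of $P_m$ shows at once that $P_m(x,-y,-z)=-P_m(x,y,z)$, hence $F_n^\star(x,y,z)\cdot F_n(x,-y,-z)=1$, which reduces the theorem to proving $F_n=U_n$. To compute $F_n$, set $X=1+x$, $Y=1+y$, $W=z-xy$, and write $1+P_m=N_m/D_m$ with $D_m=(1-\zeta_n^m)(1-X\zeta_n^m)$ and $N_m=X\zeta_n^{2m}-(X+Y-W)\zeta_n^m+Y=X(\zeta_n^m-\mu/X)(\zeta_n^m-\nu/X)$, where $\mu,\nu$ are the roots of $t^2-(X+Y-W)t+XY=0$. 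Applying the cyclotomic identities $\prod_{m=1}^{n-1}(1-a\zeta_n^m)=(a^n-1)/(a-1)$ and $\prod_{m=1}^{n-1}(\zeta_n^m-a)=(-1)^{n-1}(a^n-1)/(a-1)$, together with $(\mu-X)(\nu-X)=XW$ and $\mu\nu=XY$, gives
\begin{equation*}
F_n(x,y,z)=\frac{x\,(X^n+Y^n-\mu^n-\nu^n)}{nW(X^n-1)}.
\end{equation*}

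The hard part will be the polynomial identity
\begin{equation*}
X^n+Y^n-\mu^n-\nu^n=nW\sum_{\substack{a,b,c\ge 0\\ a+b+c=n-1}}\frac{1}{c+1}\binom{a+c}{c}\binom{b+c}{c}X^a Y^b(-W)^c,
\end{equation*}
which (via $\binom{n-a-1}{b}=\binom{b+c}{c}$, $\binom{n-b-1}{a}=\binom{a+c}{c}$, and $1/(n-a-b)=1/(c+1)$ when $a+b+c=n-1$) matches the summation in \eqref{eq:ohno-zagier-def-U}. I would prove this via generating functions: starting from
\begin{equation*}
\sum_{a,b,c\ge 0}\binom{a+c}{c}\binom{b+c}{c}X^a Y^b t^c=\frac{1}{(1-X)(1-Y)-t}
\end{equation*}
and integrating in $t$ introduces the factor $1/(c+1)$. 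Combined with the factorization $(1-Xu)(1-Yu)+uW=(1-\mu u)(1-\nu u)$, both sides of the displayed identity are seen to equal $n$ times the coefficient of $u^n$ in $\log\bigl((1-\mu u)(1-\nu u)/((1-Xu)(1-Yu))\bigr)$, completing the argument.
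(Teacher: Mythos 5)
Your argument is correct, and its first half takes a genuinely different (and more direct) route than the paper, while its second half coincides with the paper's. You derive the product representation
\begin{equation*}
F_n(x,y,z)=\prod_{m=1}^{n-1}\bigl(1+P_m\bigr),\qquad F_n^{\star}(x,y,z)=\prod_{m=1}^{n-1}\bigl(1-P_m\bigr)^{-1},\qquad P_m=\frac{y-(y+xy-z)\zeta_n^{m}}{(1-\zeta_n^{m})\bigl(1-(1+x)\zeta_n^{m}\bigr)},
\end{equation*}
directly from the definition, by fixing $(m_1,\dots,m_r)$ and summing the geometric series over each exponent $k_i$ (using that the monomial $x^{k-r-s}y^{r-s}z^{s}$ factors componentwise as $y$ or $zx^{k_i-2}$, and that strict/non-strict chains are subsets/multisets); I checked that your $P_m$ and the resulting closed form $x\,(X^n+Y^n-\mu^n-\nu^n)/\bigl(nW(X^n-1)\bigr)$ are exactly right, and the oddness $P_m(x,-y,-z)=-P_m(x,y,z)$ indeed gives the star case for free. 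The paper reaches the equivalent product $\Phi(1;\zeta_n)=\prod_{j=1}^{n-1}P(\zeta_n^{j})/\bigl((1-\zeta_n^{j})(1-u-\zeta_n^{j})\bigr)$ by a longer detour: it introduces the truncated $q$-polylogarithms $L_{\kk}(t;q)$, relates their value at $t=1$ to $\bz_n(\kk;\zeta_n)$ through a binomial change of variables (Lemma \ref{lem:ohno-zagier-L-to-z}), and obtains the product from the consistency condition of a $q$-difference equation satisfied by the generating function (Proposition \ref{prop:ohno-zagier-Phi}); you avoid the change of variables altogether because you keep the numerators $\zeta_n^{(k_i-1)m_i}$ and work with $\bz_n$ from the outset. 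From the product onward the two proofs are the same: your $\mu,\nu$ are the paper's $\alpha,\beta$ (same sum $2+x+y+xy-z$ and product $(1+x)(1+y)$, with $W=-(xy-z)$ accounting for the signs), and your final binomial identity is precisely the paper's extraction of the coefficient of $T^n$ from $-\log\bigl(1-(xy-z)T/((1-(1+x)T)(1-(1+y)T))\bigr)$, the factor $1/(c+1)$ coming from the $1/m$ of the logarithm series rather than from an integration in $t$. What the paper's route buys is the intermediate structure (polylogarithms in $t$ and their $q$-difference equation, valid for general $q$), which is in the spirit of the original Ohno--Zagier proof; what yours buys is brevity and the transparent functional equation $F_n^{\star}(x,y,z)\,F_n(x,-y,-z)=1$.
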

In \cite[Theorem 1.2]{BTT} it was shown that for any index $\kk \in (\mathbb{Z}_{\ge 1})^r$, the limit
\begin{align*}
\z(\kk)=\lim_{n\to \infty}\zn_n(\kk;e^{2\pi i/n})
\end{align*}
exists in $\C$. The real part of $\z(\kk)$ has a connection to symmetric multiple zeta values. As an application of Theorems \ref{thm:zkkk} and \ref{thm:ohno-zagier} we will give explicit evaluations of $\z(k,\dots,k)$ for $k=1,2,3$ and prove Ohno-Zagier-type relations for these complex numbers (see Proposition \ref{prop:ohno-zagier-z}). As a consequence we also obtain the following sum formula for $\z(\kk)$.

\begin{corollary}\label{cor:xisum}
For all $k\geq r \geq 1$ we have
\begin{align*}
\sum_{\kk \in I(k, r)}\z(\kk)=-\frac{(-2\pi i)^k}{(k+1)!}\sum_{j=1}^r \binom{k+1}{j} B_{k+1-j}   \,,  
\end{align*} 
where $I(k, r)$ denotes the set of indices of weight $k$ and depth $r$ and  where $B_k$ is the $k$-th Bernoulli number with the convention $B_1 = -\frac{1}{2}$.
\end{corollary}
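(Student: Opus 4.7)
The idea is to specialize the Ohno--Zagier identity of Theorem~\ref{thm:ohno-zagier} so that the height variable disappears and only weight and depth are tracked. Under the substitution $(x,y,z) = (u, uv, u^{2}v)$ the monomial $x^{k-r-s}y^{r-s}z^{s}$ collapses to $u^{k}v^{r}$, so
\[
[u^{k}v^{r}]\,F_{n}(u, uv, u^{2}v) \;=\; \sum_{\kk\in I(k,r)}\bz_{n}(\kk;\zeta_n).
\]
The crucial feature of this substitution is that $xy - z = 0$: in the formula \eqref{eq:ohno-zagier-def-U} the factor $(xy-z)^{n-1-a-b}$ then kills every term except those with $a+b = n-1$. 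For those terms both binomial coefficients equal $1$, and the surviving single sum over $a$ is geometric. Evaluating it yields the closed form
\[
F_{n}(u, uv, u^{2}v) \;=\; \frac{(1+u)^{n} - (1+uv)^{n}}{\bigl((1+u)^{n} - 1\bigr)\,(1-v)}.
\]

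Next I pass to the limit. Since $\z(\kk) = \lim_{n\to\infty}(1-\zeta_{n})^{\wt(\kk)}\bz_{n}(\kk;\zeta_n)$, replacing $u$ by $(1-\zeta_n)u$ and letting $n\to\infty$ --- using $(1 + (1-\zeta_n)u)^{n}\to e^{-2\pi i u}$ --- converts the identity above into
\[
\sum_{k,r}\Biggl(\sum_{\kk\in I(k,r)}\z(\kk)\Biggr)u^{k}v^{r}
\;=\; \frac{e^{-2\pi i u} - e^{-2\pi i u v}}{(e^{-2\pi i u} - 1)(1 - v)}.
\]
This is exactly the specialization $(x, y, z) = (u, uv, u^{2}v)$ of Proposition~\ref{prop:ohno-zagier-z}, so convergence of the coefficients is already provided by that proposition.

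Finally I extract coefficients. Writing $T = -2\pi i u$ and expanding
\[
\frac{e^{T}-e^{Tv}}{1-v} \;=\; \sum_{j\geq 1}\frac{T^{j}}{j!}\bigl(1+v+\cdots+v^{j-1}\bigr),
\]
the coefficient of $v^{r}$ on the right-hand side is $\tfrac{T}{e^{T}-1}\sum_{j\geq r+1}T^{j-1}/j!$. Inserting the Bernoulli expansion $\tfrac{T}{e^{T}-1} = \sum_{m\geq 0}B_{m}T^{m}/m!$ and extracting $[T^{k}]$ gives
\[
\sum_{\kk\in I(k,r)}\z(\kk) \;=\; \frac{(-2\pi i)^{k}}{(k+1)!}\sum_{j=r+1}^{k+1}\binom{k+1}{j}B_{k+1-j}.
\]
The claimed formula now follows from the classical Bernoulli recurrence $\sum_{j=0}^{k+1}\binom{k+1}{j}B_{k+1-j} = B_{k+1}$ (valid for $k\geq 1$), which rewrites the tail $\sum_{j=r+1}^{k+1}$ as $-\sum_{j=1}^{r}$ after cancelling the $j=0$ term. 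The only real insight is choosing the substitution $(u, uv, u^{2}v)$ so that $xy=z$ and the double sum in $U_{n}$ collapses; everything else --- the geometric-series evaluation, the limit, and the Bernoulli manipulations --- is routine once that step is in hand.
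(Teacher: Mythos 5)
Your argument is correct, but it reaches the corollary by a somewhat different route than the paper, even though both hinge on the same key idea of specializing the Ohno--Zagier relation at $z=xy$ so that the height variable disappears. The paper performs that specialization at finite $n$: this is exactly Corollary \ref{cor:generating-function-depth-one-and-sumformula}, whose proof contains your closed form for $F_n(u,uv,u^2v)$ in the equivalent form \eqref{eq:sum-formula-no-height}; it then passes to the limit term by term in the resulting finite identity \eqref{eq:sum-formula}, using only the elementary Stirling-type limit $\lim_{n\to\infty}\frac{(1-e^{2\pi i/n})^{j-1}}{n}\binom{n}{j}=\frac{(-2\pi i)^{j-1}}{j!}$ together with the depth-one evaluation \eqref{eq:xidep1}, after which the stated Bernoulli form appears directly with the summation range $1\le j\le r$ built in. You instead take the limit first, citing Proposition \ref{prop:ohno-zagier-z} for the generating-function identity over $\C$ --- which is legitimate, since that proposition is proved independently of the corollary, but it imports the contour-estimate justification of the interchange of limit and coefficient extraction, a heavier analytic input than the paper needs here --- then specialize $(x,y,z)=(u,uv,u^{2}v)$ (noting that the apparent factor $\frac{1}{xy-z}$ in $\widetilde U$ is a removable singularity, which your closed form $\frac{e^{T}-e^{Tv}}{(e^{T}-1)(1-v)}$ correctly realizes), and finally must convert the tail $\sum_{j=r+1}^{k+1}\binom{k+1}{j}B_{k+1-j}$ into $-\sum_{j=1}^{r}\binom{k+1}{j}B_{k+1-j}$ via the Bernoulli recurrence, a bookkeeping step the paper's route avoids. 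Your finite-$n$ warm-up computation is correct but logically redundant once Proposition \ref{prop:ohno-zagier-z} is invoked. In exchange, your route needs neither Corollary \ref{cor:generating-function-depth-one-and-sumformula}(ii) nor \eqref{eq:xidep1}, so each approach trades one set of inputs for the other; all the individual steps you give (the collapse of the double sum when $a+b=n-1$, the geometric evaluation, the limit $(1+(1-\zeta_n)u)^n\to e^{-2\pi iu}$, and the coefficient extraction) check out.
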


The contents of this paper are as follows. In Section \ref{sec:zkkk} we consider the values $z_n(k,\dots,k;\zeta_n)$ and give the proof of Theorem \ref{thm:zkkk}. The Ohno-Zagier-type relations and the proof of Theorem \ref{thm:ohno-zagier} will be given in Section \ref{sec:ohnozagier}. In Section \ref{sec:application} we present evaluations and the Ohno-Zagier-type relations for the values $\xi(\kk)$ and give the proof of Corollary \ref{cor:xisum}.\\

\noindent\textbf{Acknowledgments} 
This work was partially supported by JSPS KAKENHI Grant Numbers 16F16021, 16H07115, 18K13393 and 18K03233.


\section{The values $z_n(k,\dots,k;\zeta_n)$}\label{sec:zkkk}
 For depth one it was shown in \cite{BTT} (see also Corollary \ref{cor:generating-function-depth-one-and-sumformula}), that for all $k\geq 1$ we have
\begin{equation}\label{eq:degbern}
\zn_n(k;\zeta_n) = -\frac{\beta_k(n^{-1}) n^k}{k!}(1-\zeta_n)^{k} \,,
\end{equation}
where $\beta_k(x) \in \Q[x]$ is the \emph{degenerate Bernoulli number} defined by Carlitz in \cite{C}. Since $\beta_k(n^{-1}) n^k \in \Q[n]$, the $\bz_n(k;\zeta_n)$ are polynomials in $n$. For example,  we have
\begin{equation*}
\begin{aligned}
&\zn_n(1;\zeta_n) =\frac{n-1}{2} (1-\zeta_n)\,,\quad \zn_n(2;\zeta_n)=-\frac{n^2-1}{12} (1-\zeta_n)^2\,,\\
&\zn_n(3;\zeta_n) =\frac{n^2-1}{24} (1-\zeta_n)^3 \,, \quad
\zn_n(4;\zeta_n)=\frac{(n^2-1)(n^2-19)}{720} (1-\zeta_n)^4. 
\end{aligned}
\end{equation*}
The formula \eqref{eq:degbern} can be seen as an analogue of the formula by Euler for the Riemann zeta values $\zeta(k) =-\frac{B_{k}}{2k!} (-2\pi i)^{k}$ for even $k$. The multiple zeta values defined for $k_1\geq 2, k_2,\dots,k_r\geq 1$ by
\begin{align*}\label{eq:mzv}
\zeta(k_1,\ldots,k_r)=\sum_{m_1>\cdots>m_r>0 }\frac{1}{m_1^{k_1}\cdots m_r^{k_r}}.
\end{align*} 
also have explicit evaluations for even $k$ and $k_1=\dots=k_r=k$. Theorem \ref{thm:zkkk} can be seen as an analogue of these formulas.  We have for example
\[  \zeta(\{2\}^r) = \frac{\pi^{2r}}{(2r+1)!}\,, \]
which is an easy consequence of the product formula of the sine
\[ \sum_{j=0}^\infty \zeta( \{ 2 \}^j) T^{2j+1} = T \prod_{m=1}^\infty \left( 1 + \frac{T^2}{m^2} \right) = \frac{\sin\left( \pi i T\right)}{\pi i } = \sum_{j=0}^{\infty}  \frac{\pi^{2j}}{(2j+1)!}  T^{2j+1} \,. \]
We will use a similar idea for the proof of Theorem \ref{thm:zkkk}.

\begin{proof}[Proof of Theorem \ref{thm:zkkk}]

We will prove explicit formulas for the modified versions $\bz({\{k\}^r};\zeta_n)$ and then obtain the result in Theorem \ref{thm:zkkk} by multiplying with $(1-\zeta_n)^{kr}$.
By definition of the values $\bz_n({\{k\}^r};\zeta_n)$ we derive
\begin{align*}
\sum_{r=0}^{n-1} \bz_n({\{k\}^r};\zeta_n) \,X^r =  \prod_{j=1}^{n-1} \left(1 + \frac{\zeta_n^{(k-1)j}}{(1-\zeta_n^j)^k} X\right) = \frac{1}{n^{k}} \prod_{j=1}^{n-1} \big( (1-\zeta_n^j)^k + \zeta_n^{(k-1)j} X \big) \,,
\end{align*}
where we used $\prod_{j=1}^{n-1}(1-\zeta_n^j)=n$ at the last equation. For $i=1,\dots,k$ we denote by $\alpha_i$ the roots of the polynomial $(1-Y)^k + Y^{(k-1)}X$, i.e. 
\[\prod_{i=1}^k \left( \alpha_i - Y \right) :=  (1-Y)^k + Y^{(k-1)}X\,.\]
The $\alpha_i$  can not be computed explicitly but by definition they satisfy
\begin{equation}\label{eq:elsyminalpha}
\es_j(\alpha_1,\dots,\alpha_k) = \binom{k}{j}  +  (-1)^{k-1} \delta_{j,1}X \,,
\end{equation}
with the elementary symmetric polynomials $\es_j(x_1,\dots,x_k)$ defined by 
 \[\sum_{j=0}^k (-1)^j \es_j(x_1,\dots,x_k) \,T^{k-j} := \prod_{j=1}^k (T-x_j) \,. \]
Using $\prod_{j=1}^{n-1} (\alpha_i - \zeta_n^j) = \frac{\alpha_i^n-1}{\alpha_i-1}$ and $\prod_{i=1}^k \left( \alpha_i - 1\right)=X$ we obtain 
\begin{align*}
\frac{1}{n^k}\prod_{j=1}^{n-1} \prod_{i=1}^k (\alpha_i-\zeta_n^j) = \frac{1}{n^k} \prod_{i=1}^k \frac{\alpha_i^n-1}{\alpha_i-1} = \frac{1}{n^k X}  \prod_{i=1}^k (\alpha_i^n-1) \,.
\end{align*}
Now consider also the generating series of this over all $n$
\begin{align*}
\sum_{n=1}^\infty& n^{k-1} Y^n \sum_{r=0}^{n-1} \bz({\{k\}^r};\zeta_n) \,X^r = \frac{1}{X} \sum_{n=1}^\infty \frac{Y^n}{n} \prod_{i=1}^k (\alpha_i^n-1) \\ 
&= \frac{1}{X} \sum_{n=1}^\infty \frac{Y^n}{n} \left((-1)^k + \sum_{l=1}^k (-1)^{k-l} \sum_{1\leq i_1 < \dots < i_l \leq k} (a_{i_1} \dots a_{i_l})^n\right) \\
&= \frac{(-1)^{k-1}}{X} \left(-\sum_{n=1}^\infty \frac{Y^n}{n}+\sum_{l=1}^k (-1)^{l-1} \sum_{1\leq i_1 < \dots < i_l \leq k} \frac{(a_{i_1} \dots a_{i_l} Y)^n}{n} \right) \\
&= \frac{(-1)^{k-1}}{X} \left(\log(1-Y) + \sum_{l=1}^k (-1)^{l}  \sum_{1\leq i_1 < \dots < i_l \leq k} \log(1-a_{i_1} \dots a_{i_l} Y) \right) \,. 
\end{align*}
Defining for $0 \leq l \leq k$  the polynomials $F_{k,l}(X,Y)$  by
\begin{align*}
F_{k,0}(X,Y) &= 1-Y,\\
F_{k,l}(X,Y) &= \prod_{1\leq i_1 < \dots < i_l \leq k} (1-a_{i_1} \dots a_{i_l} Y)\qquad (1 \leq l \leq k)\,,
\end{align*}
we obtain
\begin{equation}\label{eq:genfctkkk}
\sum_{n=1}^\infty n^{k-1} Y^n \sum_{r=0}^{n-1} \bz({\{k\}^r};\zeta_n) \,X^r = \frac{(-1)^{k-1}}{X} \log\left(\prod_{l=0}^k F_{k,l}(X,Y)^{(-1)^l}  \right) \,.
\end{equation}
The coefficients of $Y^m$ in $F_{k,l}(X,Y)$ are symmetric polynomials in the $\alpha_1\,\dots,\alpha_k$ and can therefore be written in terms of the elementary symmetric polynomials $\es_j(\alpha_1,\dots,\alpha_k)$ in \eqref{eq:elsyminalpha}. In particular this shows that $\bz({\{k\}^r};\zeta_n) \in \Q$ for all $k,r\geq1$. 
For the cases $k=1,2,3$ we obtain
\begin{align*}
F_{1,0}(X,Y) &= F_{1,1}(X,Y)= 1-Y\,,\\
F_{2,0}(X,Y)&=F_{2,2}(X,Y)= 1-Y\,,\quad F_{2,1}(X,Y) = (1-Y)^2 + XY\,,\\
F_{3,0}(X,Y)&=F_{3,3}(X,Y)= 1-Y\,,\quad F_{3,1}(X,Y) = (1-Y)^3 - XY\,,\\
F_{3,2}(X,Y) &= (1-Y)^3 + XY^2\,.
\end{align*}
Plugging these into \eqref{eq:genfctkkk}, we get for example for the $k=3$ case
\begin{align*}
 &\frac{1}{X} \log\left(\prod_{l=0}^3 F_{3,l}(X,Y)^{(-1)^l}  \right) =  \frac{1}{X} \log\left( \frac{(1-Y)^3 + XY^2}{(1-Y)^3-XY} \right) \\
 &= \frac{1}{X}\left(  \log\left(1+ \frac{XY^2}{(1-Y)^3}\right) - \log\left(1- \frac{XY}{(1-Y)^3}\right)\right) \\
 &= \frac{1}{X} \left( - \sum_{m=1}^\infty \frac{(-1)^{m-1}}{m} \left(\frac{XY^2}{(1-Y)^3}\right)^m + \sum_{m=1}^\infty \frac{1}{n} \left(\frac{XY}{(1-Y)^3} \right)^m \right)
\end{align*}
Using for $l\geq 1$ the formula
\[\frac{1}{(1-Y)^l} = \sum_{j=0}^\infty \binom{l-1+j}{l-1} Y^j\,,\]
we obtain by \eqref{eq:genfctkkk} the formula for $z_n(\{3\}^r;\zeta_n)$ as stated in Theorem \ref{thm:zkkk}. The formulas for $z_n(\{1\}^r;\zeta_n)$ and $z_n(\{2\}^r;\zeta_n)$ follow similarly.
\end{proof}
Notice that above proof works for every $k$, i.e. it is possible to obtain explicit formulas for $z_n(\{k\}^r;\zeta_n)$. But these formulas might get quite complicated for larger $k$.
  
\begin{remark}
Additionally to the above formulas we have the following observations. For all $a,b \geq 0$ it seems to be
\begin{align*}
z_n(\{1\}^a,2,\{1\}^b;\zeta_n) +z_n(\{1\}^b,2,\{1\}^a;\zeta_n) &\overset{?}{=} -\frac{1}{n} \binom{n+1}{a+b+3} (1-\zeta_n)^{a+b+2} \,, \\
z_n(\{2\}^a,3,\{2\}^b;\zeta_n) +z_n(\{2\}^b,3,\{2\}^a;\zeta_n) &\overset{?}{=} -\frac{(-1)^{a+b}}{(a+b+2)n} \binom{n+a+b+1}{2(a+b)+3} (1-\zeta_n)^{2(a+b)+3} \,,
\end{align*}  
but so far the authors were not able to prove these formulas. 
\end{remark}


\section{Ohno-Zagier-type relation}\label{sec:ohnozagier}

\subsection{Proof of Ohno-Zagier-type relation}\label{subsec:Ohno-Zagier-proof}

The proof of Theorem \ref{thm:ohno-zagier} is essentially the same as that of 
Ohno-Zagier's relation for multiple zeta values \cite{OhnoZagier} (see also \cite{OkudaTakeyama, Takeyama}) except for the following point. 
In both proofs the main ingredient is the generating function of type \eqref{eq:ohno-zagier-generating-funtcion}. 
While Ohno-Zagier's relation follows from its explicit formula in terms of the hypergeometric function and its special value, our formula follows from a consistency condition for the $q$-difference equation of which the generating function gives a unique polynomial solution. 

\subsubsection{A $q$-analogue of finite multiple polylogarithm of one variable}

Throughout this subsection, $q$ denotes a complex number.
For a positive integer $n$ and an index $\kk=(k_{1}, \ldots , k_{r})$, 
we define the function $L_{\kk}^{(n)}(t; q)$ and its star version $L_{\kk}^{\star, (n)}(t; q)$ by 
\begin{align*}
L_{\kk}^{(n)}(t;q)&=\sum_{n>m_{1}>\cdots >m_{r}>0}\frac{t^{m_{1}}}{\prod_{i=1}^{r}(1-q^{m_{i}})^{k_i}}, \\ 
L_{\kk}^{\star, (n)}(t;q)&=\sum_{n>m_{1}\ge \cdots \ge m_{r}>0}\frac{t^{m_{1}}}{\prod_{i=1}^{r}(1-q^{m_{i}})^{k_i}}. 
\end{align*}
Note that when $q\in \C $ with $|q|<1$ or $q=\zeta_n$ a $n$-th primitive root of unity, 
$L_{\kk}^{(n)}(t; q)$ and $L_{\kk}^{\star, (n)}(t; q)$ are polynomials in $t$ whose degree is less than $n$, 
and they satisfy $L_{\mathbf{k}}^{(n)}(t; q)=O(t^{\mathrm{dep}(\mathbf{k})}) $ and 
$L_{\mathbf{k}}^{\star, (n)}(t; q)=O(t)$ as $t \to 0$. 
Taking the limit as $n \to \infty$ we recover the $q$-multiple polylogarithms defined in \cite{Bradley1, Zhao}. 

Hereafter we fix a positive integer $n$ and omit the superscript $(n)$ in 
$L_{\mathbf{k}}^{(n)}(t; q)$ and $L_{\mathbf{k}}^{\star, (n)}(t; q)$. 

We denote by $I_{0}(k, r, s)$ the set of admissible indices 
of weight $k$, depth $r$ and height $s$. 
For non-negative integers $k, r$ and $s$, set 
\begin{align*}
& 
G(k, r, s; t;q)=\sum_{\mathbf{k}\in I(k, r, s)}L_{\mathbf{k}}(t; q), \qquad 
G_{0}(k, r, s; t;q)=\sum_{\mathbf{k}\in I_{0}(k, r, s)}L_{\mathbf{k}}(t; q), 
\\ 
& 
G^\star(k, r, s; t;q)=\sum_{\mathbf{k}\in I(k, r, s)}L^{\star}_{\mathbf{k}}(t; q), \qquad 
G^{\star}_{0}(k, r, s; t;q)=\sum_{\mathbf{k}\in I_{0}(k, r, s)}L^{\star}_{\mathbf{k}}(t; q), 
\end{align*}
where $I(k, r, s)$ is the set of indices of weight $k$, depth $r$ and height $s$. 
If $I(k, r, s)$ (resp. $I_{0}(k, r, s)$) is empty, 
$G(k, r, s; t;q)$ and $G^{\star}(k, r, s; t;q)$ 
(resp. $G_{0}(k, r, s; t;q)$ and $G^\star(k, r, s; t;q)$) are zero, while 
we set by definition $G(0,0,0; t;q)=G^{\star}(0,0,0; t;q)=1$ and $G_{0}(0,0,0; t;q)=G^{\star}_0(0,0,0; t;q)=0$. 

Consider the generating functions 
\begin{align*}
& 
\Phi(t;q)=\Phi(u, v, w; t;q)=\sum_{k, r, s\ge 0}G(k, r, s; t;q)u^{k-r-s}v^{r-s}w^{s}, \\ 
& 
\Phi_{0}(t;q)=\Phi_{0}(u, v, w; t;q)=\sum_{k, r, s\ge 0}G_{0}(k, r, s; t;q)u^{k-r-s}v^{r-s}w^{s-1} ,\\
& 
\Phi^{\star}(t;q)=\Phi^{\star}(u, v, w; t;q)=\sum_{k, r, s\ge 0}G^{\star}(k, r, s; t;q)u^{k-r-s}v^{r-s}w^{s}, \\ 
& 
\Phi_{0}^{\star}(t;q)=\Phi_{0}^{\star}(u, v, w; t;q)=\sum_{k, r, s\ge 0}G_{0}^{\star}(k, r, s; t;q)u^{k-r-s}v^{r-s}w^{s-1}. 
\end{align*}
Note that they are polynomials in $t$ whose coefficients are formal power series in $u, v$ and $w$, 
and whose degree is less than $n$. 
Moreover, from the definition, it holds that 
$\Phi_{0}(0;q)=\Phi_{0}^{\star}(0;q)=0$.

The coefficients of the generating functions $\Phi(1;q)$ and $\Phi^{\star}(1;q)$ can be written 
in terms of $\bz_n(\kk;q)$ and $\bz_n^{\star}(\kk;q)$.

\begin{lemma}\label{lem:ohno-zagier-L-to-z}
Set
\begin{align}
u=\frac{x}{1+x}, \quad 
v=y-\frac{z}{1+x}, \quad 
w=\frac{z}{(1+x)^{2}}.   
\label{eq:ohno-zagier-proof-transform}
\end{align} 
Then we have
\begin{align}
\Phi(u, v, w; 1;q)=1+\sum_{r=1}^{\infty}\sum_{s=0}^{r}\sum_{k=r+s}^{\infty}
\left(\sum_{\kk \in I(k, r, s)}\bz_{n}(\kk;q)\right)x^{k-r-s}y^{r-s}z^{s}  
\label{eq:ohno-zagier-gen} 
\end{align}
and 
\begin{align}
\Phi^\star(u, v, w; 1;q)=1+\sum_{r=1}^{\infty}\sum_{s=0}^{r}\sum_{k=r+s}^{\infty}
\left(\sum_{\kk \in I(k, r, s)}\bz_{n}^{\star}(\kk;q)\right)x^{k-r-s}y^{r-s}z^{s}.  
\label{eq:ohno-zagier-gens} 
\end{align} 
\end{lemma}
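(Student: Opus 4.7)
The plan is a direct verification: expand both sides of \eqref{eq:ohno-zagier-gen} as sums over tuples $n>m_1>\cdots>m_r>0$, factor each as a product of per-coordinate contributions in $m_i$, and match these factors after the substitution \eqref{eq:ohno-zagier-proof-transform}.

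First, I interchange the order of summation in $\Phi(u,v,w;1;q)$. The weight $u^{\wt(\kk)-\dep(\kk)-\height(\kk)}v^{\dep(\kk)-\height(\kk)}w^{\height(\kk)}$ factors multiplicatively across coordinates: a coordinate $k_i$ contributes $v$ when $k_i=1$ and $u^{k_i-2}w$ when $k_i\ge 2$. Summing each $k_i$ as an independent geometric series gives
\begin{align*}
\Phi(u,v,w;1;q) = \sum_{r\ge 0}\sum_{n>m_1>\cdots>m_r>0}\prod_{i=1}^r \phi(m_i),\qquad \phi(m) = \frac{v(1-q^m-u)+w}{(1-q^m)(1-q^m-u)}.
\end{align*}
Applying the same procedure to the right-hand side of \eqref{eq:ohno-zagier-gen}, and using the formal expansion $\bz_n(\kk;q) = (1-q)^{-\wt(\kk)}z_n(\kk;q) = \sum_{n>m_1>\cdots>m_r>0}\prod_i q^{(k_i-1)m_i}(1-q^{m_i})^{-k_i}$, the per-coordinate factor becomes
\begin{align*}
\psi(m) = \frac{y}{1-q^m} + \sum_{k\ge 2}\frac{x^{k-2}zq^{(k-1)m}}{(1-q^m)^k} = \frac{y(1-(1+x)q^m)+zq^m}{(1-q^m)(1-(1+x)q^m)}.
\end{align*}

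The key step is to check $\phi(m)=\psi(m)$ under \eqref{eq:ohno-zagier-proof-transform}. From $u=x/(1+x)$ we get the identity $1-q^m-u = (1-(1+x)q^m)/(1+x)$, so the denominator of $\phi(m)$ becomes $(1+x)^{-1}(1-q^m)(1-(1+x)q^m)$. Substituting $v=y-z/(1+x)$ and $w=z/(1+x)^2$ into the numerator of $\phi(m)$ and collecting terms yields $(1+x)^{-1}\bigl(y(1-(1+x)q^m)+zq^m\bigr)$; the factors of $(1+x)$ cancel and give $\phi(m)=\psi(m)$. The equality \eqref{eq:ohno-zagier-gens} for the star version is proved in exactly the same way, with the ordering $n>m_1\ge\cdots\ge m_r>0$ on both sides; the per-coordinate factorization and the identity $\phi(m)=\psi(m)$ are unchanged.

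There is no conceptual obstacle: the argument reduces to the bookkeeping that translates the triple $(\wt,\dep,\height)$ into per-coordinate exponents, together with a single rational identity $\phi(m)=\psi(m)$ in the variable $q^m$. The only point requiring mild care is that in the generating function the weight decomposition must be performed coordinatewise \emph{before} summing over $\kk$, which is what licenses the factorization into a product over $i$ of a common function of $m_i$.
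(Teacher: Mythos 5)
Your proof is correct, but it is organized differently from the paper's. The paper first converts each $L_{\kk}(1;q)$ into a binomial-weighted sum of the $\bz_{n}(\mathbf{a};q)$ via the per-coordinate identity $\frac{1}{(1-q^{m})^{k}}=\sum_{a=1}^{k}\binom{k-1}{a-1}\frac{q^{(a-1)m}}{(1-q^{m})^{a}}$ (this is \eqref{eq:ohno-zagier-proof-transform-proof}), then resums over $\kk$ in closed form, obtaining $\Phi(1;q)$ as a sum over indices $\mathbf{a}$ of $\bz_{n}(\mathbf{a};q)$ times monomials in $\frac{u}{1-u}$, $\frac{v}{u}(1-u+\frac{w}{v})$ and $\frac{w}{u(v+w-uv)}$ governed by $\wt(\mathbf{a})$, $\dep(\mathbf{a})$, $\height(\mathbf{a})$, and only then applies the change of variables \eqref{eq:ohno-zagier-proof-transform}. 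You instead expand \emph{both} generating functions as sums over the tuples $n>m_{1}>\cdots>m_{r}>0$ (resp.\ with $\ge$ for the star case), factor coordinatewise, and reduce everything to the single rational identity $\phi(m)=\psi(m)$ in $q^{m}$, which I have checked: with $1-q^{m}-u=\frac{1-(1+x)q^{m}}{1+x}$ the numerator $v(1-q^{m}-u)+w$ indeed becomes $\frac{1}{1+x}\bigl(y(1-(1+x)q^{m})+zq^{m}\bigr)$, so the two per-coordinate factors agree. Your route is more symmetric (both sides are treated identically, and the star case is literally the same computation), while the paper's route isolates the explicit linear relation \eqref{eq:ohno-zagier-proof-transform-proof} between $L_{\kk}(1;q)$ and the $\bz_{n}$'s, which mirrors the classical Ohno--Zagier argument and makes transparent how weight, depth and height transform. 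The only point you gloss over, and which deserves one sentence, is that all interchanges of summation and the geometric series in $u$ (resp.\ $x$) are legitimate as identities of formal power series in $u,v,w$ (resp.\ $x,y,z$): for fixed $n$ the sum over $\mathbf{m}$ is finite and each monomial receives contributions from only finitely many $\kk$; since $u,v,w$ in \eqref{eq:ohno-zagier-proof-transform} have no constant term, the substitution of one power-series identity into the other is also well defined. This is the same level of rigor as the paper's own proof, so it is not a gap.
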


\begin{proof}
Since we have
\begin{align*}
\frac{1}{(1-q)^{k}}=\sum_{a=1}^{k}\binom{k-1}{a-1}\frac{q^{(a-1)m}}{(1-q^{m})^{a}} \quad 
(k \ge 1, \, m\ge 1),  
\end{align*}
it holds that 
\begin{align}
L_{k_{1}, \ldots , k_{r}}(1;q)=\sum_{a_{1}=1}^{k_{1}}\cdots \sum_{a_{r}=1}^{k_{r}}
\left\{\prod_{j=1}^{r}\binom{k_{j}-1}{a_{j}-1}\right\}
\bz_{n}(a_{1}, \ldots , a_{r};q). 
\label{eq:ohno-zagier-proof-transform-proof}
\end{align} 
Using the above formula we find that 
\begin{align*}
& 
\Phi(u, v, w; 1;q) \\  
&=\sum_{r=1}^{n-1}\left(\frac{v}{u}\right)^{r}  
\sum_{a_{1}, \ldots , a_{r}\ge 1} \bz_{n}(a_{1}, \ldots , a_{r};q)\prod_{j=1}^{r}\left( 
\sum_{k=a_{j}}^{\infty}\binom{k-1}{a_{j}-1}u^{k}\left(\frac{w}{uv}\right)^{\theta(k\ge 2)}\right), 
\end{align*}
where $\theta(\mathrm{P})=1$ if P is true and 
$\theta(\mathrm{P})=0$ if P is false. 
It holds that 
\begin{align*}
& 
\sum_{k=a}^{\infty}\binom{k-1}{a-1}u^{k}\left(\frac{w}{uv}\right)^{\theta(k\ge 2)} \\ 
&=\left(\frac{u}{1-u}\right)^{a}\left(1-u+\frac{w}{v}\right)
\left(\frac{w}{u(v+w-uv)}\right)^{\theta(a\ge 2)} \qquad (a \ge 1). 
\end{align*}
Thus we get 
\begin{align*}
& 
\Phi(u, v, w; 1;q) \\ 
&=\sum_{\substack{\mathbf{a} \\ \dep(\mathbf{a})\le n-1}} 
\bz_{n}(\mathbf{a};q)
\left(\frac{u}{1-u}\right)^{\wt(\mathbf{a})}
\left\{\frac{v}{u}\left(1-u+\frac{w}{v}\right)\right\}^{\dep(\mathbf{a})}
\left(\frac{w}{u(v+w-uv)}\right)^{\height(\mathbf{a})}.  
\end{align*}
Changing the variables $u, v$ and $w$ to $x, y$ and $z$ by \eqref{eq:ohno-zagier-proof-transform}, 
we get the desired formula \eqref{eq:ohno-zagier-gen}.

For the star version, the formula \eqref{eq:ohno-zagier-proof-transform-proof} still holds 
if $L_{k_{1}, \ldots , k_{r}}(1;q)$ and $\bz_{n}(a_{1}, \ldots , a_{r};q)$ 
are replaced by 
$L_{k_{1}, \ldots , k_{r}}^{\star}(1;q)$ and $\bz^{\star}_{n}(a_{1}, \ldots , a_{r};q)$, respectively.
Then one can check the equality \eqref{eq:ohno-zagier-gens} in the same way.
\end{proof}

We define the $q$-difference operator $\mathcal{D}_{q}$ by 
\begin{align*}
(\mathcal{D}_{q}f)(t)=\frac{1}{t}\left(f(t)-f(qt)\right).  
\end{align*}
The above generating functions satisfy the following $q$-difference equations.

\begin{lemma}
We have
\begin{align}
& 
\begin{aligned}
& 
qt(1-t)\mathcal{D}_{q}^{2}\Phi_{0}(t;q)+\left\{(1-q-u)(1-t)-vt\right\}\mathcal{D}_{q}\Phi_{0}(t;q)+
(uv-w)\Phi_{0}(t;q) 
\\
&=1-\Phi(1;q)t^{n-1},
\label{eq:ohno-zagier-proof1} 
\end{aligned}
\\ 
& 
\begin{aligned}
& 
qt^{2}(1-t)\mathcal{D}^{2}_{q}\Phi_{0}^{\star}(t;q)+t\left\{(1-q-u)(1-t)-v\right\}\mathcal{D}_{q}\Phi_{0}^{\star}(t;q)+
(uv-w)\Phi_{0}^{\star}(t;q)
\\ 
&=t-\Phi^{\star}(1;q)t^{n}.
\nonumber
\end{aligned}
\end{align}
\end{lemma}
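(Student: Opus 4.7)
The plan is to derive the two $q$-difference equations in parallel from the action of $t\mathcal{D}_q$ on the building blocks $L_\kk(t;q)$ and $L^\star_\kk(t;q)$, in analogy with the classical differential-equation proof of Ohno--Zagier. The starting point is to extend the index set to allow $k_1=0$ and record two direct computations. For $\kk=(k_1,\kk')$ with $k_1\ge 1$, the cancellation $(1-q^{m_1})/(1-q^{m_1})^{k_1}=(1-q^{m_1})^{-(k_1-1)}$ gives $t\mathcal{D}_q L_\kk(t;q)=L_{(k_1-1,\kk')}(t;q)$ (and the same identity for the star version). In the boundary case $k_1=0$, summing the geometric progression in $m_1$ yields, for any $\kk'$,
\[
(1-t)\,L_{(0,\kk')}(t;q) \;=\; t\,L_{\kk'}(t;q) - t^n\,L_{\kk'}(1;q),
\]
while in the star version
\[
(1-t)\,L^\star_{(0,\kk')}(t;q) \;=\; L^\star_{\kk'}(t;q) - t^n\,L^\star_{\kk'}(1;q)
\]
for $\kk'$ nonempty, together with the exceptional identity $(1-t)L^\star_{(0)}(t;q)=t-t^n$ when $\kk'$ is empty. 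This mismatch is precisely what will produce the differing right-hand sides of the two equations in the lemma.

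Next, I will introduce the auxiliary generating series
\[
\Psi(t;q) \;=\; \sum_{\kk'} L_{(1,\kk')}(t;q)\,u^{\wt(\kk')-\dep(\kk')-\height(\kk')}v^{\dep(\kk')-\height(\kk')}w^{\height(\kk')},
\]
summed over all $\kk'$ including the empty one, and its star analogue $\Psi^\star$. Sorting each admissible $\kk=(k_1,\kk')$ contributing to $\Phi_0$ by whether $k_1\ge 3$ (so $t\mathcal{D}_q L_\kk$ remains admissible, the only change being that the weight drops by one) or $k_1=2$ (so $t\mathcal{D}_q L_\kk=L_{(1,\kk')}$, and weight, depth, and height all drop by one), a direct exponent count gives
\[
t\mathcal{D}_q\Phi_0(t;q) \;=\; u\,\Phi_0(t;q) + \Psi(t;q);
\]
the point is that the $w^{s-1}$ normalisation in the definition of $\Phi_0$ is exactly what makes the $k_1=2$ branch glue onto $\Psi$. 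Splitting $\Phi$ according to empty, admissible, and leading-$1$ non-admissible indices and re-indexing the last class then produces $\Phi=1+w\Phi_0+v\Psi$, and hence
\[
\Phi(t;q) \;=\; 1+(w-uv)\Phi_0(t;q)+v\,t\mathcal{D}_q\Phi_0(t;q).
\]
The star analogues of both identities hold by the same derivation.

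Finally, I will apply $(1-t)\,t\mathcal{D}_q$ to $\Psi$ and substitute the $k_1=0$ identity from the first step. In the non-star case all terms assemble uniformly and
\[
(1-t)\,t\mathcal{D}_q\Psi(t;q) \;=\; t\,\Phi(t;q) - t^n\,\Phi(1;q),
\]
while the exceptional contribution of the empty $\kk'$ in the star case produces an extra correction $-(1-t)$:
\[
(1-t)\,t\mathcal{D}_q\Psi^\star(t;q) \;=\; \Phi^\star(t;q) - (1-t) - t^n\,\Phi^\star(1;q).
\]
Replacing $\Phi$ (resp.\ $\Phi^\star$) by the expression in terms of $\Phi_0$ (resp.\ $\Phi_0^\star$) from the previous step, and applying the elementary identity $(t\mathcal{D}_q)^2 f = qt^2\mathcal{D}_q^2 f + (1-q)\,t\mathcal{D}_q f$, reduces everything to the two second-order $q$-difference equations as stated. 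The main obstacle is the exponent book-keeping in the key identity $t\mathcal{D}_q\Phi_0=u\Phi_0+\Psi$; once this is in hand, the rest is manipulation of rational functions in $t,q,u,v,w$.
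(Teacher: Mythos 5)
Your proof is correct and follows essentially the same route as the paper: your identities $t\mathcal{D}_q\Phi_0=u\Phi_0+\Psi$ and $\Phi=1+w\Phi_0+v\Psi$ (so $\Psi=(\Phi-1-w\Phi_0)/v$) are precisely the paper's two first-order $q$-difference relations between $\Phi$ and $\Phi_0$, obtained from the same case split $k_1\ge 2$ versus $k_1=1$ (with the exceptional depth-one star case), and eliminating $\Psi$ via $(t\mathcal{D}_q)^2=qt^2\mathcal{D}_q^2+(1-q)t\mathcal{D}_q$ is just a repackaging of the paper's elimination of $\Phi$ by the $q$-Leibniz rule. The only differences are cosmetic (allowing a leading zero entry instead of writing the $k_1=1$ derivative formula directly); incidentally, your star identity $(1-t)L^\star_{(0,\kk')}(t)=L^\star_{\kk'}(t)-t^{n}L^\star_{\kk'}(1)$ carries the correct power $t^{n}$, whereas the paper's displayed formula for $\mathcal{D}_qL^\star_{k_1,\dots,k_r}(t)$ with $k_1=1$, $r\ge 2$ shows $t^{n-1}$, an apparent typo that does not affect the final equations.
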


\begin{proof}
We make use of the following formulas:  
\begin{align*}
\mathcal{D}_{q}L_{k_{1}, \ldots , k_{r}}(t)=\left\{ 
\begin{array}{ll}
\displaystyle 
\frac{1}{t}L_{k_{1}-1, k_{2}, \ldots , k_{r}}(t) & (k_{1}\ge 2) \\
\displaystyle 
\frac{1}{1-t}\left(L_{k_{2}, \ldots , k_{r}}(t)-t^{n-1}L_{k_{2}, \ldots  k_{r}}(1)\right) &  (k_{1}=1) 
\end{array}
\right. 
\end{align*}
and
\begin{align*}
\mathcal{D}_{q}L_{k_{1}, \ldots , k_{r}}^{\star}(t)=\left\{ 
\begin{array}{ll}
\displaystyle 
\frac{1}{t}L_{k_{1}-1, k_{2}, \ldots , k_{r}}^{\star}(t) & (k_{1}\ge 2) \\
\displaystyle 
\frac{1}{t(1-t)}\left(L_{k_{2}, \ldots , k_{r}}^{\star}(t)-t^{n-1}L_{k_{2}, \ldots , k_{r}}^{\star}(1)\right) & 
(k_{1}=1, \, r\ge 2) \\ 
\displaystyle 
\frac{1-t^{n-1}}{1-t} & (k_{1}=1, \, r=1). 
\end{array}
\right. 
\end{align*}
Since the proof is similar, we only prove \eqref{eq:ohno-zagier-proof1}.
From the above formulas, we see that
\begin{align*}
& 
\mathcal{D}_{q}G_{0}(k, r, s; t;q) \\ 
&=\frac{1}{t}\left( 
G_{0}(k-1, r, s; t;q)+G(k-1, r, s-1; t;q)-G_{0}(k-1, r, s-1; t;q)
\right), \\ 
& 
\mathcal{D}_{q}\left(G(k, r, s; t;q)-G_{0}(k, r, s; t;q)\right) \\ 
&=\frac{1}{1-t}\left(
G(k-1,r-1,s; t;q)-t^{n-1}G(k-1,r-1,s; 1;q) 
\right),
\end{align*}
and hence 
\begin{align*}
& 
\mathcal{D}_{q}\Phi_{0}(t;q)=
\frac{1}{t}\left\{(u-\frac{w}{v})\Phi_{0}(t;q)+\frac{1}{v}\left(\Phi(t;q)-1\right)\right\},  
\\ 
& 
\mathcal{D}_{q}\Phi(t;q)-w\mathcal{D}_{q}\Phi_{0}(t;q)=\frac{v}{1-t}\left(\Phi(t;q)-t^{n-1}\Phi(1;q)\right).  
\end{align*}
Eliminate $\Phi(t;q)$ using the Leibniz rule 
\begin{align*}
\mathcal{D}_{q}(f(t)g(t))=\mathcal{D}_{q}(f(t))g(t)+f(qt)\mathcal{D}_{q}(g(t)).  
\end{align*}
Then we get \eqref{eq:ohno-zagier-proof1}.
\end{proof}

\begin{proposition}\label{prop:ohno-zagier-Phi}
We have
\begin{align}
\Phi(1;q)=\prod_{j=1}^{n-1}\frac{P(q^{j})}{(1-q^{j})(1-u-q^{j})}, 
\label{eq:ohno-zagier-proof4}
\end{align}
and
\begin{align}
\Phi^{\star}(1;q)=\prod_{j=1}^{n-1}\frac{(1-q^{j})(1-u-q^{j})}{P^{\star}(q^{j})},  
\label{eq:ohno-zagier-proof5}
\end{align}
where $P(X)$ and $P^{\star}(X)$ are the quadratic polynomials given by  
\begin{align}
P(X)=(1-u-X)(1+v-X)+w, \,\, P^{\star}(X)=(1-u-X)(1-v-X)-w. 
\label{eq:ohno-zagier-proof3.5}
\end{align}
\end{proposition}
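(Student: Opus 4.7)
The plan is to solve the $q$-difference equation \eqref{eq:ohno-zagier-proof1} directly, exploiting the fact that $\Phi_{0}(t;q)$ is a polynomial in $t$ of degree at most $n-1$ with $\Phi_{0}(0;q)=0$. Writing $\Phi_{0}(t;q)=\sum_{m=1}^{n-1}c_{m}\,t^{m}$ and using $\mathcal{D}_{q}t^{m}=(1-q^{m})t^{m-1}$, I expand each operator appearing in \eqref{eq:ohno-zagier-proof1} and read off the coefficient of $t^{m}$ on both sides. This converts the $q$-difference equation into a finite system of linear relations on the $c_{m}$; these relations will determine the $c_{m}$ recursively and, at the top degree, also pin down $\Phi(1;q)$.

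After grouping the contributions from $qt(1-t)\mathcal{D}_{q}^{2}$, from $\{(1-q-u)(1-t)-vt\}\mathcal{D}_{q}$ and from $(uv-w)\cdot\mathrm{id}$, I expect the coefficient of $t^{m}$ on the left-hand side to simplify to
\[
c_{m+1}(1-q^{m+1})(1-u-q^{m+1})-c_{m}\,P(q^{m}),
\]
with the convention $c_{0}=c_{n}=0$. The key algebraic identity is the factorization
\[
(1-q^{m})(1-q^{m}-u+v)-(uv-w)=(1-u-q^{m})(1+v-q^{m})+w=P(q^{m}),
\]
which is just completion of a product in the variable $A=1-q^{m}$. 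Matching against the right-hand side $1-\Phi(1;q)t^{n-1}$ then yields three kinds of equations: the $t^{0}$ coefficient gives $c_{1}=1/[(1-q)(1-u-q)]$; for $1\le m\le n-2$ the right-hand side vanishes and we obtain the two-term recursion $c_{m+1}(1-q^{m+1})(1-u-q^{m+1})=c_{m}\,P(q^{m})$; and the $t^{n-1}$ coefficient, using $c_{n}=0$, forces $\Phi(1;q)=c_{n-1}\,P(q^{n-1})$. Telescoping the recursion through $c_{n-1}$ and multiplying by $P(q^{n-1})$ yields the product formula \eqref{eq:ohno-zagier-proof4}.

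For the star version \eqref{eq:ohno-zagier-proof5} I would apply the same scheme to the second $q$-difference equation satisfied by $\Phi_{0}^{\star}(t;q)$. The extra factor of $t$ in the operators shifts indices, so that after the analogous bookkeeping the coefficient of $t^{m}$ takes the form $d_{m}\,P^{\star}(q^{m})-d_{m-1}(1-q^{m-1})(1-u-q^{m-1})$; the differences in the operator (an extra $t$ absorbed in the second-order piece, and $-v$ in place of $-vt$) translate into replacing $1+v-X$ by $1-v-X$ and $+w$ by $-w$, so that the quadratic polynomial is now $P^{\star}(X)$. Matching against $t-\Phi^{\star}(1;q)t^{n}$ yields $d_{1}=1/P^{\star}(q)$ at $m=1$, the analogous two-term recursion for $2\le m\le n-1$, and the boundary relation $\Phi^{\star}(1;q)=d_{n-1}(1-q^{n-1})(1-u-q^{n-1})$ at $m=n$; telescoping produces \eqref{eq:ohno-zagier-proof5}.

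The main obstacle is purely bookkeeping: identifying the correct grouping of coefficients so that the quadratic polynomials $P$ and $P^{\star}$ from \eqref{eq:ohno-zagier-proof3.5} emerge as the natural combinations, and being careful about which low-$m$ and high-$m$ coefficients contribute to the recursion versus the boundary data. Once this factorization is in hand the recursion is two-term, the telescoping is immediate, and the boundary equation at the top power of $t$ converts directly into the desired product.
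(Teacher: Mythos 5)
Your proposal is correct and follows essentially the same route as the paper: substituting $\Phi_{0}(t;q)=\sum_{m=1}^{n-1}c_{m}t^{m}$ (resp.\ the star analogue) into the $q$-difference equations, reading off the $t^{0}$ (resp.\ $t^{1}$) coefficient, the two-term recursion $c_{m+1}(1-q^{m+1})(1-u-q^{m+1})=P(q^{m})c_{m}$, and the top-degree boundary relation, then telescoping. Your coefficient bookkeeping and the factorization identity producing $P$ and $P^{\star}$ check out, so no gap remains.
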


\begin{proof}
Set $\Phi_{0}(t;q)=\sum_{j=1}^{n-1}c_{j}t^{j}$ and substitute it into the equation \eqref{eq:ohno-zagier-proof1}. 
Then we find that 
\begin{align}
& 
(1-q)(1-q-u)c_{1}=1, 
\label{eq:ohno-zagier-proof3-initial} \\
& 
(1-q^{j+1})(1-u-q^{j+1})c_{j+1}=P(q^{j})c_{j} \quad (1\le j \le n-2), 
\label{eq:ohno-zagier-proof3-recurrence} \\
& 
P(q^{n-1})c_{n-1}=\Phi(1;q).
\label{eq:ohno-zagier-proof3-end} 
\end{align}
{}From the initial condition \eqref{eq:ohno-zagier-proof3-initial} and 
the recurrence relation \eqref{eq:ohno-zagier-proof3-recurrence}, we get 
\begin{align*}
c_{n-1}=\frac{\prod_{j=1}^{n-2}P(q^{j})}{\prod_{j=1}^{n-1}(1-q^{j})(1-u-q^{j})}.  
\end{align*} 
Hence the consistency between the above formula and \eqref{eq:ohno-zagier-proof3-end} implies 
\eqref{eq:ohno-zagier-proof4}.
The verification of \eqref{eq:ohno-zagier-proof5} is similar. 
\end{proof}

\subsubsection{Specialization at a root of unity}

Now we set $q=\zeta_{n}$. 
By Lemma \ref{lem:ohno-zagier-L-to-z}, we know that 
\begin{align*}
\Phi(u, v, w; 1;\zeta_n) =F_{n}(x, y, z), \quad \Phi^\star(u, v, w; 1;\zeta_n) =F_{n}^\star(x, y, z),
\end{align*}
where $u,v,w$ are given by \eqref{eq:ohno-zagier-proof-transform}.
Therefore Theorem \ref{thm:ohno-zagier} follows from the equalities 
\begin{align*}
\Phi(1; \zeta_{n})=U_{n}(x, y, z) \quad \hbox{and} \quad \Phi^{\star}(1; \zeta_{n})=U_{n}(x, -y, -z)^{-1}.  
\end{align*}

Let us rewrite the right hand side of \eqref{eq:ohno-zagier-proof4} with $q=\zeta_{n}$. 
After the change of variables \eqref{eq:ohno-zagier-proof-transform}, 
the polynomial $P(X)$ defined by \eqref{eq:ohno-zagier-proof3.5} becomes 
\begin{align*}
P(X)=\frac{1+y}{1+x}-\left(1+y+\frac{1-z}{1+x}\right)X+X^{2}. 
\end{align*}
Now we introduce two variables $\alpha$ and $\beta$ such that 
\begin{align*}
\alpha+\beta=(1+x)(1+y)+1-z, \qquad \alpha\beta=(1+x)(1+y). 
\end{align*}
Then it holds that 
\begin{align*}
P(X)=\left(\frac{\alpha}{1+x}-X\right)\left(\frac{\beta}{1+x}-X\right).   
\end{align*}
Using \eqref{eq:ohno-zagier-proof-transform} and the equality 
$\prod_{j=1}^{n-1}(T-\zeta_n^{j})=(T^{n}-1)/(T-1)$, 
we can rewrite the right hand side of \eqref{eq:ohno-zagier-proof4} at $q=\zeta_n$ as 
\begin{align*}
\prod_{j=1}^{n-1}\frac{P(\zeta_n^{j})}{(1-\zeta_n^{j})(1-u-\zeta_n^{j})}=\frac{x}{(1+x)^{n}-1}\,\frac{1}{xy-z}\,
\frac{\alpha^{n}+\beta^{n}-(1+x)^{n}-(1+y)^{n}}{n}.  
\end{align*}
Thus we find that 
\begin{align}
\Phi(1; \zeta_{n})=\frac{x}{(1+x)^{n}-1}\,\frac{1}{xy-z}\,
\frac{\alpha^{n}+\beta^{n}-(1+x)^{n}-(1+y)^{n}}{n}.  
\label{eq:ohno-zagier-pf-formula-Fn}
\end{align}
To calculate the last factor we consider the generating function 
\begin{align*}
\sum_{n=1}^{\infty}\frac{\alpha^{n}+\beta^{n}-(1+x)^{n}-(1+y)^{n}}{n}\,T^{n}.   
\end{align*}
It is equal to 
\begin{align*}
& 
\log{\left(\frac{(1-(1+x)T)(1-(1+y)T)}{(1-\alpha T)(1-\beta T)}\right)}=-\log{
\left(1-\frac{(xy-z)T}{(1-(1+x)T)(1-(1+y)T)}\right)
} \\ 
&=\sum_{n=1}^{\infty}\frac{1}{n}\left(\frac{(xy-z)T}{(1-(1+x)T)(1-(1+y)T)}\right)^{n}. 
\end{align*}
Expand it into a power series of $T$. 
Then the coefficient of $T^{n}$ is given by 
\begin{align*}
\sum_{\substack{a, b \ge 0 \\ a+b\le n-1}}
\frac{1}{n-a-b}\binom{n-a-1}{b}\binom{n-b-1}{a}
(1+x)^{a}(1+y)^{b}(xy-z)^{n-a-b}.  
\end{align*}
Thus we get $\Phi(1; \zeta_{n})=U_{n}(x, y, z)$. 

The proof for the remaining equality $\Phi^{\star}(1; \zeta_{n})=U_{n}(x, -y, -z)^{-1}$ is similar. 
The polynomial $P^{\star}(X)$ is obtained from $P(X)$ by the change of variables $(v, w) \to (-v, -w)$. 
It corresponds to the change $(y, z) \to (-y, -z)$ 
under the transform \eqref{eq:ohno-zagier-proof-transform}.  
Since $\Phi(1; \zeta_{n})=U_{n}(x, y, z)$, 
we get $\Phi^{\star}(1; \zeta_{n})=U_{n}(x, -y, -z)^{-1}$. 
This completes the proof of Theorem \ref{thm:ohno-zagier}.
\qed

We end this subsection by considering a specialization of Theorem \ref{thm:ohno-zagier}.

\begin{corollary}\label{cor:generating-function-depth-one-and-sumformula}
(i) It holds that 
\begin{align}
\sum_{k=1}^{\infty}\bz_{n}(k;\zeta_n)x^{k}=\frac{nx}{1-(1+x)^{n}}+1.  
\label{eq:generating-function-depth-one}
\end{align} 
(ii) Suppose that $k \ge r$ and $n>r>0$, and 
denote by $I(k, r)$ the set of indices of weight $k$ and depth $r$. 
Then we have
\begin{align}
\sum_{\mathbf{k} \in I(k, r)}\bz_{n}(\mathbf{k};\zeta_n)=\sum_{j=1}^{r}\frac{1}{n}\binom{n}{j}
\bz_{n}(k+1-j;\zeta_n).  
\label{eq:sum-formula}
\end{align}
\end{corollary}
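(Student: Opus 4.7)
The plan is to prove both parts by specializing the identity $F_n(x, y, z) = U_n(x, y, z)$ from Theorem \ref{thm:ohno-zagier} at $z = xy$. Under this substitution the monomial $x^{k-r-s}y^{r-s}z^s$ in the definition of $F_n$ collapses to $x^{k-r}y^r$, eliminating the dependence on the height $s$. Hence
\[
F_n(x, y, xy) = 1 + \sum_{r \ge 1}\sum_{k \ge r}\Bigl(\sum_{\mathbf{k}\in I(k, r)}\bz_n(\mathbf{k};\zeta_n)\Bigr)x^{k-r}y^r.
\]
On the $U_n$-side, setting $z = xy$ forces $(xy - z)^{n-1-a-b}$ to vanish unless $a+b = n-1$, and the surviving single sum telescopes as $\sum_{a=0}^{n-1}(1+x)^a(1+y)^{n-1-a} = ((1+x)^n - (1+y)^n)/(x-y)$, giving the compact formula
\[
F_n(x, y, xy) = \frac{x\bigl((1+x)^n - (1+y)^n\bigr)}{(x-y)\bigl((1+x)^n - 1\bigr)}.
\]

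For part (i), I would extract the coefficient of $y^1$ using the expansion $((1+x)^n - (1+y)^n)/(x-y) = \sum_{j=0}^{n-1}(1+x)^{n-1-j}(1+y)^j$. This yields $\sum_{k\ge 1}\bz_n(k;\zeta_n)x^{k-1} = (x/((1+x)^n - 1))\sum_{j=1}^{n-1}j(1+x)^{n-1-j}$. Multiplying by $x$ and invoking the polynomial identity $x^2\sum_{j=1}^{n-1}j(1+x)^{n-1-j} = (1+x)^n - 1 - nx$, which follows from differentiating $\sum_{j=0}^{n-1}(1+x)^j = ((1+x)^n - 1)/x$ and rearranging, produces $\sum_k \bz_n(k;\zeta_n)x^k = ((1+x)^n - 1 - nx)/((1+x)^n - 1) = 1 + nx/(1-(1+x)^n)$, as claimed.

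For part (ii), I would substitute $y \mapsto xy$ into the closed form for $F_n(x, y, xy)$ to obtain
\[
F_n(x, xy, x^2 y) = \frac{(1+x)^n - (1+xy)^n}{(1-y)\bigl((1+x)^n - 1\bigr)},
\]
which by the collapse of height equals $1 + \sum_{\mathbf{k}}\bz_n(\mathbf{k};\zeta_n)x^{\wt(\mathbf{k})}y^{\dep(\mathbf{k})}$. On the other hand, writing $G(t) = 1 - nt/((1+t)^n - 1)$ for the depth-one generating function supplied by part (i), I would rewrite the right-hand side of (ii) as a bivariate generating series in $x, y$: reindex $l = k+1-j$, swap the order of summation, and use $\sum_{j=1}^n\binom{n}{j}(xy)^{j-1} = ((1+xy)^n - 1)/(xy)$ together with the geometric sum in $y$ to obtain $((1+xy)^n - 1)(G(xy) - G(x))/(nx(y-1))$. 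Substituting the closed form of $G$ collapses both expressions to $(y((1+x)^n - 1) - ((1+xy)^n - 1))/((1-y)((1+x)^n - 1))$, proving the sum formula.

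The only real obstacle is the bookkeeping in part (ii): the simultaneous reindexing in $k$, $r$, $j$ and the swapping of summation orders is technical, but once the compact form of $G$ from part (i) is substituted, both generating functions reduce to the same rational expression by a direct simplification. All other steps are routine consequences of the specialization $z = xy$.
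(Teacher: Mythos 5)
Your proposal is correct and follows essentially the same route as the paper: both specialize the Ohno--Zagier identity $F_n=U_n$ at $z=xy$, where only the terms with $a+b=n-1$ survive, read off the depth-one case to get (i), and then feed (i) back in to deduce the sum formula (ii). The only difference is organizational --- you package both sides as closed rational generating functions (using the further substitution $y\mapsto xy$ and the identity $\sum_{a+b=n-1}(1+x)^a(1+y)^b=\frac{(1+x)^n-(1+y)^n}{x-y}$) and match them, whereas the paper compares coefficients of $y^r$ and then of $x^k$ directly; all the identities you invoke check out.
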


\begin{proof}
Setting $z=xy$ we have 
\begin{align*}
F_{n}(x, y, xy)=1+\sum_{r=1}^{n-1}\sum_{k=r}^{\infty}
\left(\sum_{\mathbf{k} \in I(k, r)}\bz_{n}(\mathbf{k};\zeta_n)\right)x^{k-r}y^{r}. 
\end{align*}
We rewrite $U_{n}(x, y, xy)$ as follows. 
Since it holds that   
\begin{align*}
& 
\sum_{\substack{a, b\ge 0 \\ a+b=n-1}}\binom{n-a-1}{b}\binom{n-b-1}{a}(1+x)^{a}(1+y)^{b} \\ 
&=\sum_{\substack{a, b\ge 0 \\ a+b=n-1}}(1+x)^{a}(1+y)^{b} \\ 
&=\frac{(1+x)^{n}-1}{x}+\sum_{r=1}^{n-1}y^{r}x^{-r}\left(
\frac{(1+x)^{n}-1}{x}-\sum_{j=1}^{r}\binom{n}{j}x^{j-1}\right),  
\end{align*}
we have 
\begin{align*}
U_{n}(x, y, xy)=1+\sum_{r=1}^{n-1}y^{r}x^{-r}
\left(1-\frac{x}{(1+x)^{n}-1}\sum_{j=1}^{r}\binom{n}{j}x^{j-1}\right).   
\end{align*}
Comparing the coefficient of $y^{r}$ in the equality $F_{n}(x, y, xy)=U_{n}(x, y, xy)$, 
we see that 
\begin{align}
\sum_{k=r}^{\infty}\left(\sum_{\mathbf{k} \in I(k, r)} \bz_{n}(\mathbf{k};\zeta_n)\right)x^{k}=
1-\frac{x}{(1+x)^{n}-1}\sum_{j=1}^{r}\binom{n}{j}x^{j-1}
\label{eq:sum-formula-no-height}
\end{align}
for $r \ge 1$. 
Setting $r=1$ we obtain \eqref{eq:generating-function-depth-one}.

From \eqref{eq:generating-function-depth-one} we see that 
\begin{align*}
\frac{x}{(1+x)^{n}-1}=\frac{1}{n}\left(1-\sum_{l=1}^{\infty} \bz_{n}(l;\zeta_n)x^{l}\right). 
\end{align*}
Substituting it into \eqref{eq:sum-formula-no-height} we obtain the equality 
\begin{align*}
\sum_{k=r}^{\infty}\left(\sum_{\mathbf{k} \in I(k, r)} \bz_{n}(\mathbf{k};\zeta_n)\right)x^{k}=
1-\left(1-\sum_{l=1}^{\infty} \bz_{n}(l;\zeta_n)x^{l}\right)
\left(1+\sum_{j=1}^{r-1}\frac{1}{n}\binom{n}{j+1}x^{j}\right). 
\end{align*}
It implies the desired equality \eqref{eq:sum-formula}.
\end{proof}
%
%

\section{Applications} \label{sec:application}

In this section we give applications to the numbers $\z(\kk)$ and  $\z^\star(\kk)$, which were defined in \cite[Theorem 1.2]{BTT} by the limits
\begin{align*}
\z(\kk)=\lim_{n\to \infty}\zn_n(\kk;e^{2\pi i/n})\,,\qquad \z^\star(\kk)=\lim_{n\to \infty}\zn^\star_n(\kk;e^{2\pi i/n})\,. 
\end{align*}
 The real part of $\z(\kk)$ is congruent to the symmetric multiple zeta values modulo $\zeta(2)$. From \eqref{eq:degbern} it follows that for all $k\geq 1$ we have 
\begin{align}\label{eq:xidep1}
\xi(k) = -\frac{B_k}{k!}(-2\pi i)^k\,,
\end{align}
where $B_k$ is the $k$-th Bernoulli number with the convention $B_1 = -\frac{1}{2}$. As a consequence of Section \ref{sec:zkkk} we obtain the following.
\begin{corollary}
For all $k,r\geq 1$  we have $\z({\{k\}^r}) \in (-2\pi i)^{kr}\Q $ and in particular
\begin{align*}
\z({\{1\}^r}) &= \frac{(-2\pi i)^r}{r+1}\,, \nonumber \\ 
\z({\{2\}^r}) &= \frac{2^{2r} \pi^{2r}}{(r+1) (2r+1)!}= \frac{2^{2r}}{r+1} \zeta(\{2\}^r)\,, \\
\z({\{3\}^r}) &= \frac{(1+(-1)^r) (-2\pi i)^{3r}}{(r+1)(3r+2)!}  \,.
\nonumber 
\end{align*}
\end{corollary}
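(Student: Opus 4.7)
The plan is to substitute the closed-form expressions from Theorem \ref{thm:zkkk} into the defining limit $\xi(\{k\}^r) = \lim_{n\to\infty} z_n(\{k\}^r; e^{2\pi i/n})$ and extract the leading-order asymptotics in $1/n$. The only analytic input required is the elementary expansion
\[
n\bigl(1-e^{2\pi i/n}\bigr) \longrightarrow -2\pi i \qquad (n \to \infty),
\]
so that $(1-\zeta_n)^{kr} = (-2\pi i)^{kr}\,n^{-kr}\,(1+O(1/n))$.

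For each $k\in\{1,2,3\}$, Theorem \ref{thm:zkkk} writes $z_n(\{k\}^r;\zeta_n) = c_k(n)\,(1-\zeta_n)^{kr}$ where $c_k(n)$ is the explicit rational prefactor given there. A direct inspection shows that $c_k(n)\in\Q[n]$ is a polynomial in $n$ of degree exactly $kr$: for $k=1$ one has $c_1(n) = (n-1)(n-2)\cdots(n-r)/(r+1)!$, and for $k=2$ the symmetric form $c_2(n) = (-1)^r\prod_{j=1}^{r}(n^2-j^2)/[(r+1)(2r+1)!]$ appears after noting that the binomial $\binom{n+r}{2r+1}$ contains $n$ as a factor. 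For $k=3$ one must check that the combination $\binom{n+2r+1}{3r+2}+(-1)^r\binom{n+r}{3r+2}$ vanishes to second order at $n=0$; this is a short computation (both the value and the $n$-derivative at $n=0$ cancel), after which $c_3(n)$ is visibly a degree-$3r$ polynomial. Consequently $\lambda_k := \lim_{n\to\infty} c_k(n)/n^{kr} \in \Q$ exists and equals the leading coefficient of $c_k(n)$.

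Rewriting
\[
z_n(\{k\}^r;\zeta_n) = \frac{c_k(n)}{n^{kr}}\,\bigl(n(1-\zeta_n)\bigr)^{kr}
\]
and passing to the limit gives $\xi(\{k\}^r) = \lambda_k\,(-2\pi i)^{kr} \in (-2\pi i)^{kr}\,\Q$, which already proves the rationality assertion. Finally, using $\binom{n+a}{b}\sim n^b/b!$ one reads off
\[
\lambda_1 = \frac{1}{(r+1)!},\quad \lambda_2 = \frac{(-1)^r}{(r+1)(2r+1)!},\quad \lambda_3 = \frac{1+(-1)^r}{(r+1)(3r+2)!},
\]
and multiplying by $(-2\pi i)^{kr}$ recovers the stated evaluations; the alternative form for $\xi(\{2\}^r)$ follows from $(-2\pi i)^{2r} = (-1)^r 2^{2r}\pi^{2r}$ combined with Euler's formula $\zeta(\{2\}^r) = \pi^{2r}/(2r+1)!$ recalled earlier in the paper. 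There is no genuine obstacle: the argument is a direct asymptotic extraction, the only subtle point being the verification in the $k=3$ case that the bracketed combination is divisible by $n^2$ so that the degree count works out.
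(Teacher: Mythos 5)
Your argument is correct and is exactly the paper's own proof: the corollary is deduced from Theorem \ref{thm:zkkk} by writing $(1-\zeta_n)^{kr}=n^{-kr}\bigl(n(1-e^{2\pi i/n})\bigr)^{kr}$, using $n(1-e^{2\pi i/n})\to-2\pi i$, and reading off the leading coefficient of the rational prefactor in $n$, just as you do. One caveat: your leading coefficient $\lambda_1=\frac{1}{(r+1)!}$ is the correct one, so the limit is $\z(\{1\}^r)=\frac{(-2\pi i)^r}{(r+1)!}$, which differs from the displayed $\frac{(-2\pi i)^r}{r+1}$ by a factor of $r!$; it is the displayed formula that is off (the denominator should be $(r+1)!$, as one also checks against Corollary \ref{cor:xisum} with $k=r$), so your closing claim that $\lambda_1$ ``recovers the stated evaluation'' is accurate only after correcting that misprint, while your $k=2$ and $k=3$ evaluations match the statement as given.
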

\begin{proof} This follows directly from Theorem \ref{thm:zkkk} together with the fact that $ n (1-e^{2\pi i/n})$ goes to $-2\pi i$ as $n \rightarrow \infty$.
\end{proof}

As a consequence of the results in Section \ref{sec:ohnozagier} we obtain the following Ohno-Zagier-type relations for the values  $\z(\kk)$.
\begin{proposition}\label{prop:ohno-zagier-z}
For positive integers $k,r$ and $s$, 
the sum of $\z(\kk)$ (or $\z^{\star}(\kk)$) over $I(k, r, s)$ belongs to $(-2\pi i)^{k}\,\Q$. 
More explicitly we have the following formula for the generating functions. 
Define 
\begin{align*}
& 
\widetilde{F}(x, y, z)=1+\sum_{r=1}^{\infty}\sum_{s=0}^{r}\sum_{k=r+s}^{\infty}
\left((-2\pi i)^{-k}\sum_{\kk \in I(k, r, s)}\z(\kk)\right)x^{k-r-s}y^{r-s}z^{s}, \\ 
& 
\widetilde{F}^{\star}(x, y, z)=1+\sum_{r=1}^{\infty}\sum_{s=0}^{r}\sum_{k=r+s}^{\infty}
\left((-2\pi i)^{-k}\sum_{\kk \in I(k, r, s)}\z^{\star}(\kk)\right)x^{k-r-s}y^{r-s}z^{s}. 
\end{align*} 
Then we have 
\begin{align*}
\widetilde{F}(x, y, z)=\widetilde{U}(x, y, z), \qquad 
\widetilde{F}^{\star}(x, y, z)=\widetilde{U}(x, -y, -z)^{-1},   
\end{align*}
where 
\begin{align*}
\widetilde{U}(x, y, z)=e^{y/2}\frac{x}{\sinh{(x/2)}}
\frac{\cosh{(\frac{1}{2}\sqrt{(x+y)^{2}-4z})}-\cosh{(\frac{1}{2}(x-y))}}{xy-z}. 
\end{align*}
\end{proposition}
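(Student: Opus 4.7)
The plan is to obtain Proposition \ref{prop:ohno-zagier-z} by specializing Theorem \ref{thm:ohno-zagier} and passing to the limit $n \to \infty$ under a suitable rescaling of the variables. The key observation is the weight-book\-keeping: in $F_n(x,y,z)$ the monomial $x^{k-r-s}y^{r-s}z^{s}$ is attached to $\bz_n(\kk;\zeta_n)$, and since $k=(k-r-s)+(r-s)+2s$, the substitution $(x,y,z)\mapsto (x/n,y/n,z/n^{2})$ produces the overall factor $n^{-k}$. Combined with $n(1-\zeta_n)\to -2\pi i$ and $z_n(\kk;\zeta_n)\to \z(\kk)$, this gives
\[
\lim_{n\to\infty}n^{-k}\bz_n(\kk;\zeta_n)=\lim_{n\to\infty}\bigl(n(1-\zeta_n)\bigr)^{-k}z_n(\kk;\zeta_n)=(-2\pi i)^{-k}\z(\kk),
\]
so $\widetilde F(x,y,z)=\lim_{n\to\infty}F_n(x/n,y/n,z/n^{2})$ coefficientwise (and analogously for the star version). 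The existence of these limits at each coefficient is guaranteed by \cite[Theorem 1.2]{BTT}.

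The substantive step is to evaluate $\lim_{n\to\infty}U_n(x/n,y/n,z/n^{2})$. Rather than work with the double sum defining $U_n$, I would use the compact formula \eqref{eq:ohno-zagier-pf-formula-Fn} from the proof of Theorem \ref{thm:ohno-zagier}, namely
\[
U_n(x,y,z)=\frac{x}{(1+x)^{n}-1}\cdot\frac{1}{xy-z}\cdot\frac{\alpha^{n}+\beta^{n}-(1+x)^{n}-(1+y)^{n}}{n},
\]
where $\alpha,\beta$ are the roots of $T^{2}-\bigl((1+x)(1+y)+1-z\bigr)T+(1+x)(1+y)$. After substituting $(x,y,z)\mapsto(x/n,y/n,z/n^{2})$ and cancelling the $1/n$ and $1/(xy-z)$ factors using $(x/n)(y/n)-z/n^{2}=(xy-z)/n^{2}$, one is left with
\[
U_n(x/n,y/n,z/n^{2})=\frac{x}{(1+x/n)^{n}-1}\cdot\frac{\alpha_{n}^{n}+\beta_{n}^{n}-(1+x/n)^{n}-(1+y/n)^{n}}{xy-z}.
\]
The main technical point is the asymptotic analysis of $\alpha_n,\beta_n$. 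Writing $\alpha_n=1+a/n+a_{2}/n^{2}+O(n^{-3})$, $\beta_n=1+b/n+b_{2}/n^{2}+O(n^{-3})$ and matching with the Vieta relations gives at order $1/n$ the identity $a+b=x+y$, and comparing the $1/n^{2}$ coefficients of the sum and product yields $ab=z$; thus $a,b$ are the roots of $T^{2}-(x+y)T+z$, so $a-b=\sqrt{(x+y)^{2}-4z}$. It follows that $\alpha_n^{n}\to e^{a}$, $\beta_n^{n}\to e^{b}$, $(1+x/n)^{n}\to e^{x}$, $(1+y/n)^{n}\to e^{y}$, and the limit becomes
\[
\frac{x}{e^{x}-1}\cdot\frac{e^{a}+e^{b}-e^{x}-e^{y}}{xy-z}.
\]

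Finally, a short hyperbolic computation using $e^{a}+e^{b}=2e^{(x+y)/2}\cosh\bigl(\tfrac{1}{2}\sqrt{(x+y)^{2}-4z}\bigr)$ and $e^{x}+e^{y}=2e^{(x+y)/2}\cosh\bigl(\tfrac{x-y}{2}\bigr)$ together with $x/(e^{x}-1)=xe^{-x/2}/(2\sinh(x/2))$ collapses this limit to $\widetilde U(x,y,z)$, giving $\widetilde F=\widetilde U$. The star identity $\widetilde F^{\star}(x,y,z)=\widetilde U(x,-y,-z)^{-1}$ follows from the same rescaling applied to the second formula in Theorem \ref{thm:ohno-zagier}, since $(y,z)\mapsto(-y,-z)$ commutes with $(y,z)\mapsto(y/n,z/n^{2})$. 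The main obstacle is the $1/n^{2}$-order expansion of $\alpha_{n}\beta_{n}$ that is needed to pin down $ab=z$ (the relation $a+b=x+y$ alone is insufficient); once this is handled, everything else is bookkeeping.
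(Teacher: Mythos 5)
Your outline follows the paper's route (rescale $(x,y,z)\mapsto(x/n,y/n,z/n^2)$, use the closed form \eqref{eq:ohno-zagier-pf-formula-Fn}, expand the quadratic roots, simplify with hyperbolic functions), and the computational core is sound: your Vieta matching at order $1/n^2$ correctly yields $a+b=x+y$, $ab=z$, hence $e^{a}+e^{b}=2e^{(x+y)/2}\cosh\bigl(\tfrac12\sqrt{(x+y)^2-4z}\bigr)$, and the final simplification to $\widetilde U$ is exactly the paper's. The star case is also handled as in the paper.

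The genuine gap is the identification $\widetilde F(x,y,z)=\lim_{n\to\infty}U_n(x/n,y/n,z/n^2)$. What you actually have are two different limits: a \emph{coefficientwise} limit (each coefficient of $F_n(x/n,y/n,z/n^2)=U_n(x/n,y/n,z/n^2)$ tends to the corresponding coefficient of $\widetilde F$, by \cite[Theorem 1.2]{BTT}) and a \emph{pointwise} limit of the functions $U_n(x/n,y/n,z/n^2)$ toward $\widetilde U$. Nothing in your argument ties these together: pointwise convergence of analytic functions together with coefficientwise convergence does not by itself force the coefficientwise limit to be the Taylor expansion of the pointwise limit; one needs a uniformity statement (local uniform boundedness plus Vitali/Montel, or dominated convergence in Cauchy's coefficient integrals). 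This is precisely where the paper spends most of its proof: it writes the coefficient of $x^{k-r-s}y^{r-s}z^{s}$ as a triple contour integral of $U_n(x/n,y/n,z/n^2)$, proves the uniform bound $\bigl|U_n(x/n,y/n,z/n^2)\bigr|\le 2e^{\epsilon+2\epsilon^2}$ on the contours $|x|=|y|=\epsilon$, $|z|=\epsilon^2$ (via the factorization $J_1J_2$ and the binomial identity $\sum_{a+b=m}\binom{n-a-1}{b}\binom{n-b-1}{a}=\binom{2n-m-1}{m}$), and only then interchanges limit and integration. Your closing remark that the $1/n^2$ expansion is ``the main obstacle'' misplaces the difficulty: the root asymptotics are routine, while the limit--coefficient interchange is the step that requires an actual estimate and is missing from your proposal. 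Supplying such a bound (or an equivalent normal-family argument on a fixed polydisc) would close the gap; the same justification is also needed for the star version before inverting.
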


\begin{proof}

Let $F_{n}(x, y, z)$ be the generating function defined by \eqref{eq:ohno-zagier-generating-funtcion}. 
The coefficient of $x^{k-r-s}y^{r-s}z^{s}$ in $F_{n}(x/n, y/n, z/n^{2})$ is equal to 
$n^{-k}\sum_{\kk \in I(k, r, s)} \bz_{n}(\kk;e^{2\pi i/n})$. 
{}From the definition of $\z(\kk)$, it holds that 
\begin{align*}
\lim_{n \to \infty}n^{-\wt(\kk)}\bz_{n}(\kk;e^{2\pi i/n})=(-2\pi i)^{-\wt(\kk)}\z(\kk).  
\end{align*}
Hence we see that 
\begin{align}
& 
(-2\pi i)^{-k}\sum_{\kk \in I(k, r, s)}\z(\kk) 
\nonumber \\ 
&=\lim_{n \to \infty}
\oint x^{-(k-r-s)-1}\frac{dx}{2\pi i}
\oint y^{-(r-s)-1}\frac{dy}{2\pi i}
\oint z^{-s-1}\frac{dz}{2\pi i}\, 
U_{n}(\frac{x}{n}, \frac{y}{n}, \frac{z}{n^{2}}), 
\label{eq:ohno-zagier-gamma-pf1}
\end{align}
where $U_{n}$ is defined by \eqref{eq:ohno-zagier-def-U} and 
$\oint$ denotes the integration around the origin. 

To justify the interchange of limit and integration 
we estimate $U_{n}(x/n, y/n, z/n^{2})$. 
There exists a positive constant $c$ such that 
\begin{align*}
0<\frac{e^{t}-(1+t)}{t} \le ct \qquad (0<t\le 1). 
\end{align*} 
We choose a positive constant $\epsilon$ so that $c\,\epsilon<1/2$, 
and the integration contours to be $|x|=\epsilon, |y|=\epsilon$ and $|z|=\epsilon^{2}$. 
We decompose into two parts $U_{n}(x/n, y/n, z/n^{2})=J_{1}J_{2}$, where  
\begin{align*}
& 
J_{1}=\frac{x}{(1+x/n)^{n}-1}, \\ 
& 
J_{2}=\frac{1}{n}\sum_{\substack{a, b \ge 0 \\ a+b\le n-1}}\frac{1}{n-a-b}
\binom{n-a-1}{b}\binom{n-b-1}{a}
\left(1+\frac{x}{n}\right)^{a}
\left(1+\frac{y}{n}\right)^{b}
\left(\frac{xy-z}{n^{2}}\right)^{n-1-a-b}.   
\end{align*}

First we see that, on the circle $|x|=\epsilon$,  
\begin{align*}
\left|J_{1}^{-1}-1\right|\le 
\frac{1}{\epsilon} \left\{ \left(1+\frac{\epsilon}{n}\right)^{n}-(1+\epsilon) \right\}
\le \frac{1}{\epsilon}(e^{\epsilon}-1-\epsilon) \le c\epsilon <\frac{1}{2}.  
\end{align*}
Hence $|J_{1}|<2$. 
Second $J_{2}$ is estimated by using $|x|=|y|=\epsilon$ and $|z|=\epsilon^{2}$ as 
\begin{align*}
\left|J_{2}\right| &\le 
\frac{1}{n}\sum_{\substack{a, b \ge 0 \\ a+b\le n-1}}\frac{1}{n-a-b}
\binom{n-a-1}{b}\binom{n-b-1}{a}
\left(1+\frac{\epsilon}{n}\right)^{a+b}
\left(\frac{2\epsilon^{2}}{n^{2}}\right)^{n-1-a-b} \\ 
&\le 
\frac{1}{n}\sum_{m=0}^{n-1}
\left(1+\frac{\epsilon}{n}\right)^{m}
\left(\frac{2\epsilon^{2}}{n^{2}}\right)^{n-m-1} 
\sum_{\substack{a, b \ge 0 \\ a+b=m}}
\binom{n-a-1}{b}\binom{n-b-1}{a} \\ 
&\le 
e^{\epsilon} 
\sum_{m=0}^{n-1} \frac{1}{n^{n-m}} \binom{2n-m-1}{m} 
\left(\frac{2\epsilon^{2}}{n}\right)^{n-m-1}.  
\end{align*}
Here we used
\begin{align*}
\sum_{\substack{a, b \ge 0 \\ a+b=m}}\binom{n-a-1}{b}\binom{n-b-1}{a}=\binom{2n-m-1}{m} \qquad 
(n>m\ge 0),
\label{eq:binom-prod-simplify} 
\end{align*}
 to obtain the last inequality.  
Since 
\begin{align*}
\frac{1}{n^{n-m}} \binom{2n-m-1}{m}&=
\binom{n-1}{m} \frac{(n-m-1)!}{(2n-2m-1)!}  \prod_{a=1}^{n-m-1}\! \left(2-\frac{m+a}{n}\right) \\ 
&\le \binom{n-1}{m} 
\frac{2^{n-m-1}(n-m-1)!}{(2n-2m-1)!} \le \binom{n-1}{m}, 
\end{align*}
we find that 
\begin{align*}
\left|J_{2}\right| \le  e^{\epsilon} 
\sum_{m=0}^{n-1} \binom{n-1}{m}
\left(\frac{2\epsilon^{2}}{n}\right)^{n-m-1}=
e^{\epsilon} \left(1+\frac{2\epsilon^{2}}{n}\right)^{n-1} \le 
e^{\epsilon+2\epsilon^{2}}. 
\end{align*}
Thus we see that 
\begin{align*}
\left|U_{n}(\frac{x}{n}, \frac{y}{n}, \frac{z}{n^{2}})\right| \le 2 e^{\epsilon+2\epsilon^{2}} 
\end{align*}
on $|x|=|y|=\epsilon, |z|=\epsilon^{2}$. 
Therefore we can interchange the limit and the integration of \eqref{eq:ohno-zagier-gamma-pf1}. 
As a consequence we find that 
\begin{align*}
\widetilde{F}(x, y, z)=\lim_{n \to \infty}U_{n}\left(\frac{x}{n}, \frac{y}{n}, \frac{z}{n^{2}}\right).  
\end{align*}

To calculate the limit in the right hand side, 
we make use of the expression \eqref{eq:ohno-zagier-pf-formula-Fn}. 
It holds that 
\begin{align*}
U_{n}\left(\frac{x}{n}, \frac{y}{n}, \frac{z}{n^{2}}\right)=\frac{1}{xy-z}\frac{x}{(1+x/n)^{n}-1}
\left\{\tilde{\alpha}^{n}+\tilde{\beta}^{n}-\left(1+\frac{x}{n}\right)^{n}-\left(1+\frac{y}{n}\right)^{n}\right\},    
\end{align*}
where $\tilde{\alpha}$ and $\tilde{\beta}$ are determined from 
\begin{align*}
\tilde{\alpha}+\tilde{\beta}=\left(1+\frac{x}{n}\right)\left(1+\frac{y}{n}\right)+1-\frac{z}{n^{2}}, \quad 
\tilde{\alpha}\tilde{\beta}=\left(1+\frac{x}{n}\right)\left(1+\frac{y}{n}\right). 
\end{align*}
Then we see that the asymptotic behavior of $\tilde{\alpha}$ and $\tilde{\beta}$ are given by 
\begin{align*}
1+\frac{1}{2n}\left(x+y\pm \sqrt{(x+y)^{2}-4z}\right)+o(\frac{1}{n}) \qquad (n \to \infty).  
\end{align*}
Hence we find that 
\begin{align*}
& 
\lim_{n \to \infty}U_{n}\left(\frac{x}{n}, \frac{y}{n}, \frac{z}{n^{2}}\right)=
\frac{1}{xy-z}\frac{x}{e^{x}-1}
\left\{2e^{(x+y)/2}\cosh{(\frac{1}{2}\sqrt{(x+y)^{2}-4z})}-e^{x}-e^{y}\right\} \\ 
&=\frac{1}{xy-z}\frac{xe^{y/2}}{\sinh{(x/2)}}
\left\{\cosh{(\frac{1}{2}\sqrt{(x+y)^{2}-4z})}-\cosh{(\frac{1}{2}(x-y))}\right\}. 
\end{align*}
Thus we get the equality $\widetilde{F}(x, y, z)=\widetilde{U}(x, y, z)$. 

By the same argument as before we see that 
\begin{align*}
\widetilde{F}^{\star}(x, y, z)^{-1}=\lim_{n \to \infty}U_{n}\left(\frac{x}{n}, -\frac{y}{n}, -\frac{z}{n^{2}}\right)  \,.
\end{align*}
Hence it also holds that 
$\widetilde{F}^{\star}(x, y, z)=\widetilde{U}(x, -y, -z)^{-1}$.
\end{proof}

We end this note by giving the proof of the sum-formula for the $\xi(\kk)$.
\begin{proof}[Proof of Corollary \ref{cor:xisum}]
Using Stirling's formula, we see that
\begin{align*}
\lim\limits_{n\to \infty}\frac{(1-e^{2\pi i/n})^{j-1}}{n}\binom{n}{j} = \frac{(-2\pi i)^{j-1}}{j!}. 
\end{align*}
Combining this with Corollary \ref{cor:generating-function-depth-one-and-sumformula}, 
we obtain
\begin{align*}
\sum_{\kk \in I(k, r)}\z(\kk)=\sum_{j=1}^{r}\frac{(-2\pi i)^{j-1}}{j!} \z(k+1-j).
\end{align*} 
Using \eqref{eq:xidep1} for the evaluation of $\z(k+1-j)$ we obtain the formula in Corollary \ref{cor:xisum}.
\end{proof}

%
%
%
%
%



\end{document}